\newtheorem{theorem}{Theorem}[section]
\newtheorem{lemma}[theorem]{Lemma}
\newtheorem{corollary}[theorem]{Corollary}
\theoremstyle{definition}
\newtheorem{definition}[theorem]{Definition}
\theoremstyle{remark}
\numberwithin{equation}{section}
\newcommand{\h}{\mathcal{H}}
\newcommand{\m}{\mathcal{M}}
\newcommand{\bn}{\overline{B_n}}
\newcommand{\f}{\mathcal{F}}
\begin{document}
	
	\title[Near Invariance of The Dual Compressed Shift]{Near Invariance of The Dual Compressed Shift}
	
	\author[Arup Chattopadhyay and Supratim Jana]{Arup Chattopadhyay$^*$ and Supratim Jana$^{**}$}
	\maketitle
	
	\paragraph{\textbf{Abstract}}
	We present the notion of the nearly dual compressed shift-invariant subspaces of the orthogonal complement of the model space and obtain their structure using Hitt's algorithm \cite{DH}.

	\vspace{0.5cm}
	\paragraph{\textbf{Keywords}} Hardy space, Inner function, Model space, Compressed shift, Invariant subspace, Nearly Invariant subspace.
	\vspace{0.5cm}
	\paragraph{\textbf{Mathematics Subject Classification (2020)}} Primary: 47A15.
	
	\section{\textbf{Introduction and Preliminaries}}
	
	We denote the open unit disk in the complex plane by $ \mathbb{D}:=\{ z \in \mathbb{C}: |z| < 1 \} $ and its boundary, the unit circle, by $ \mathbb{T}:=\{ z \in \mathbb{C}: |z|=1 \} $. The \emph{Hardy space} $H^2(\mathbb{D})$ is defined as the space of analytic functions on $\mathbb{D}$ given by
	\[
	H^2(\mathbb{D}) = \left\{ f(z) = \sum_{n=0}^\infty a_n z^n \; : \; \sum_{n=0}^\infty |a_n|^2 < \infty \right\}.
	\]
	As is customary, we identify each element of $H^2(\mathbb{D})$ with its non-tangential boundary values, which exist almost everywhere on the unit circle $\mathbb{T}$ by Fatou's theorem. This boundary space is denoted by $H^2(\mathbb{T})$, the closed subspace of $L^2(\mathbb{T})$ consisting of those functions whose negative Fourier coefficients are zero. Here, $L^2(\mathbb{T})$ denotes the space of square-integrable functions on $\mathbb{T}$ with respect to the normalized Lebesgue measure. For convenience, we use the notation $H^2$ to refer to the Hardy space either over $\mathbb{D}$ or over $\mathbb{T}$, depending upon the context.
	
	An \emph{inner function} $\theta$ is a bounded analytic function on $\mathbb{D}$ whose boundary values have unit modulus almost everywhere on $\mathbb{T}$. The shift operator on $H^2$, denoted by $S$, is defined by
	$S(f) = zf.$
	The celebrated Beurling's theorem \cite{AB} characterizes all nontrivial shift-invariant subspaces of $H^2$, showing that they are precisely of the form $\theta H^2$, where $\theta$ is an inner function. As a consequence, the \emph{model space}
	$K_\theta = H^2 \ominus \theta H^2$
	arises as the invariant subspace of the adjoint of $S$, denoted by $S^*$, and it is defined as
	\[
	S^*(f)(z) = \frac{f(z)-f(0)}{z} = P(\bar{z}f),
	\]
	where $P: L^2(\mathbb{T}) \to H^2(\mathbb{T})$ denotes the orthogonal projection.
	
	The \emph{Toeplitz operator} $T_\phi$ acting on the Hardy space $H^2$ is defined by
	$ T_\phi(f) = P(\phi f),$
	where $\phi \in L^\infty$, the Banach space of essentially bounded functions on $\mathbb{T}$. This classical operator was first introduced by Brown and Halmos in their seminal paper \cite{BH}. Equivalently, $T_\phi$ can be described as the compression of the multiplication operator $M_\phi$ on $L^2(\mathbb{T})$ to the Hardy space $H^2$.
	\vspace{0.1in}
	
	In 2007, Sarason, in his seminal paper \cite{SAR1}, introduced a further compression of the Toeplitz operator to the model space $K_\theta$, which he termed the \emph{truncated Toeplitz operator} (TTO). The truncated Toeplitz operator, denoted by $A_\phi^\theta$, is a densely defined operator given by
	$
	A_\phi^\theta(h) = P_\theta(\phi h),
	$
	where $\phi \in L^2(\mathbb{T})$, $h \in K_\theta$ with $\phi h \in L^2(\mathbb{T})$, and $P_\theta : L^2(\mathbb{T}) \to K_\theta$ denotes the orthogonal projection. In that article \cite{SAR1}, Sarason provided a characterization of truncated Toeplitz operators and carried out extensive operator-theoretic investigations. In recent years, this topic has attracted considerable attention from numerous mathematicians.
	
	Recently, Ding and Sang introduced the study of \emph{dual truncated Toeplitz operators} (DTTO) in their article \cite{DS}. This operator is defined on the orthogonal complement of the model space $K_\theta^\perp$ by
	\[
	D_\phi(h) = (I - P_\theta)(\phi h), \quad h \in K_\theta^\perp,
	\]
	where $P_\theta : L^2(\mathbb{T}) \to K_\theta$ denotes the orthogonal projection. Subsequently, C.~Gu provided further characterizations of DTTO in \cite{CG1} and generalized several of the results established in \cite{DS}.
	
	The compression of the shift operator $S$ to the model space $K_\theta$, defined by
	$
	S_\theta(f) = P_\theta(zf), \quad f \in K_\theta,
	$
	is known as the \emph{compressed shift operator}. Similarly, the operator
	\[
	U_\theta(h) = (I - P_\theta)(zh), \quad h \in K_\theta^\perp,
	\]
	is called the \emph{dual of the compressed shift} $S_\theta$. This operator may be viewed as the compression of the bilateral shift on $L^2(\mathbb{T})$ to $K_\theta^\perp$. For convenience, we shall denote this operator by $U_\theta$ throughout the remainder of this article. It is easy to observe that $U_\theta=D_z$. Recent works \cite{CR, LSD} initiated the study of invariant subspaces of $K_\theta^\perp$ under the action of $U_\theta$. For further developments on invariant subspaces related to $U_\theta$, the readers may consult \cite{CG2, GL, DT}.
	
	In 1988, while studying invariant subspaces of the Hardy space on the annulus, D.~Hitt introduced the concept and structure of \emph{nearly $S^*$-invariant} subspaces of $H^2,$ a notion later refined by Sarason \cite{SAR2}. The definition is as follows.
	
	\begin{definition}
		A closed subspace $M$ of $H^2$ is called \emph{nearly $S^*$-invariant} subspace if for every $f\in M$ with $f(0)=0 \implies S^*f \in M$.
	\end{definition}
	
	Equivalently, the definition can be rephrased as: a closed subspace $M \subset H^2$ is called \emph{nearly $S^*$-invariant} if any one of the following conditions holds:
	\begin{align}\label{eqd1}
	(i) \quad & S^*(M \cap 1^\perp) \subset M, \\
	(ii) \label{eqd2} \quad & S^*(M \cap zH^2) \subset M, \\
	(iii) \label{eqd3} \quad & S^*(M \cap SH^2) \subset M.
	\end{align}
	
	Since the shift operator $S$ is left-invertible (as $S^*S = I$ on $H^2$), Liang and Partington \cite{LP} introduced the concept of a nearly $T^{-1}$-invariant subspace of a Hilbert space $\mathcal{H}$, defined by the condition
	$
	T^{-1}(M \cap T\mathcal{H}) \subset M,
	$
	where $T^{-1}$ denotes the left inverse of a bounded operator $T$ on $\mathcal{H}$. Subsequently, Camara and Partington \cite{CP} introduced and studied the notion of nearly $\eta$-invariant subspaces of $H^2$. In recent years, several related forms of near-invariance have been defined and investigated (see, for example, \cite{DSa, KLS}).
	
	In analogy with condition~(iii) of the preceding definition, we now introduce the notion of a nearly invariant subspace for a general bounded operator as follows.
	
	\begin{definition} \label{D}
		Let $\mathcal{H}$ be a Hilbert space, and let $T: \mathcal{H} \rightarrow \mathcal{H}$ be a bounded operator. A closed subspace $\mathcal{F}$ of $\mathcal{H}$ will be called \emph{nearly $T$-invariant} subspace if $ T( \mathcal{F} \cap T^* \h ) \subset \mathcal{F}.$ 
	\end{definition}
	
	As a consequence, we define a \emph{nearly dual compressed shift-invariant} subspace (or \emph{nearly $U_\theta$-invariant} subspace) of $K_\theta^\perp$ as follows.
	
	\begin{definition}
		A closed subspace $\mathcal{M} \subset K_\theta^\perp$ is called a \emph{nearly dual compressed shift-invariant} subspace (or \emph{nearly $U_\theta$ -invariant}) if $ U_\theta (\mathcal{M}\cap U_\theta^* K_\theta^\perp) \subset \mathcal{M}$, where $U_\theta^* = D_{\bar{z}}$ denotes the adjoint of the dual compressed shift operator $U_\theta$.
	\end{definition}
	
	Similarly, a closed subspace $\mathcal{M} \subset K_\theta^\perp$ is called \emph{nearly $U_\theta^*$ -invariant} if  $U_\theta^*(\mathcal{M}\cap U_\theta K_\theta^\perp) \subset \mathcal{M}$.
	\vspace{0.1in}
	
	It is essential to observe that the behavior of $U_\theta$ and $U_\theta^*$ is determined by whether $\theta(0)=0$ or $\theta(0)\neq 0$. Also, none of them is an isometry when $\theta(0)=0$. 
	\vspace{0.1in}
	
	The main objective of this article is to characterize the structure of nearly $U_\theta$ (and $U_\theta^*$ -) invariant subspaces $\mathcal{M} \subset K_\theta^\perp$ of the form 
	$
	\mathcal{M} = \mathcal{M}_- \oplus \mathcal{M}_+,
	$
	where $\mathcal{M}_-$ is a closed subspace of $\overline{H_0^2} \,(= \overline{zH^2})$, and $\mathcal{M}_+$ is a closed subspace of $\theta H^2$. And it is the best we can obtain. Indeed, Câmara and Ross \cite{CR}, as well as Li et al.~\cite{LSD}, have constructed examples of $U_\theta$ -invariant subspaces which cannot be expressed in the form $\mathcal{M}_- \oplus \mathcal{M}_+$, and hence the same limitation applies to nearly $U_\theta$ -invariant subspaces. 
	
	The article is organized as follows. In Section 2, we obtain the structure of a nearly $U_\theta^*$ -invariant subspace, when $\theta_0\neq0$. In Section 3, we obtain the same for the case $\theta_0=0$ (see Theorem~\ref{mainthm}) by using Hitt's celebrated algorithm. Moreover, as a consequence of our main results, we also discussed the splitting subspaces of $H^2_{\mathbb{C}^2}$ that are nearly $S\oplus S^*$-invariant (see Corollary~\ref{mainconsequence}).
	
	\section{Facts on invariance of the dual compressed shift}
	
	In this section, we recall some important results on the dual compressed shift, established by several mathematicians. To begin with, we present the explicit action of the dual compressed shift $U_\theta$, its adjoint $U_\theta^*$, and their products on $K_\theta^\perp$, as obtained by C.~Gu in \cite{CG1}. 
	
	\begin{lemma}[Lemma 2.1 and 2.2 \cite{CG1}]\label{Glemma}
		For $h\in K_\theta^\perp,$ we have 
		\begin{align*}
		& (i) U_\theta (h)= zh + \langle h, \bar{z} \rangle ( \overline{\theta_0} \theta - 1 ),\\
		& (ii) U_\theta^*(h)= \bar{z}h + \langle h, \theta \rangle ( \theta_0 - \theta) \bar{z}, \\
		& (iii) I - U_\theta^* U_\theta = (1-|\theta_0|^2) \bar{z} \otimes \bar{z},\\
		& (iv) I- U_\theta U_\theta^* = (1-|\theta_0|^2) \theta \otimes \theta,
		\end{align*}
		where $\theta_0$ stands for both $\theta(0)$ (when $\theta$ is considered to be analytic on $\mathbb{D}$) and the $0^{th}$ Fourier coefficient (when $\theta$ is considered as a boundary function) of the inner function $\theta.$ Note that, for a Hilbert space $\mathcal{H}$, and for $f, g\in \mathcal{H}$, $f\otimes g$ denotes the rank one operator on $\mathcal{H}$ defined by $f\otimes g(h)=\langle h, g\rangle f$. 
	\end{lemma}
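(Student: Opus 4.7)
The plan is to establish (i) and (ii) by direct computation using the formula $P_\theta = P - M_\theta P M_{\bar\theta}$ for the orthogonal projection onto $K_\theta$ (where $P: L^2(\mathbb{T}) \to H^2$ is the standard Szeg\H{o} projection and $M_\phi$ denotes multiplication by $\phi$), and then to deduce (iii) and (iv) from (i)--(ii) by Pythagoras together with polarization. Throughout I decompose $K_\theta^\perp = \overline{zH^2}\oplus\theta H^2$ and write $h = h_-+h_+$ accordingly.

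For (i), note that $zh_+$ lies in $\theta H^2 \subset K_\theta^\perp$ and so is killed by $P_\theta$, while for $h_- = \sum_{n\geq 1} a_n \bar z^n$ the expansion $zh_- = a_1 + (\text{tail in }\overline{zH^2})$ gives $P_\theta(zh) = a_1 P_\theta(1)$. Applying the projection formula yields $P_\theta(1) = 1 - \theta P(\bar\theta) = 1 - \overline{\theta_0}\theta = k_0^\theta$, the reproducing kernel of $K_\theta$ at the origin. Noting $a_1 = \langle h,\bar z\rangle$ (because $\theta H^2 \perp \bar z$) and subtracting $P_\theta(zh)$ from $zh$ gives (i). For (ii) the roles reverse: $\bar z h_- \in \overline{zH^2} \subset K_\theta^\perp$ contributes nothing, while for $h_+ = \theta g$ with $g \in H^2$ a short calculation yields
\[
P_\theta(\bar z\theta g) = P(\bar z\theta g) - \theta P(\bar z g) = S^*(\theta g) - \theta S^* g = g(0)\,\bar z(\theta - \theta_0),
\]
and since $g(0) = \langle\theta g,\theta\rangle = \langle h,\theta\rangle$, (ii) follows.

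Identities (iii) and (iv) then fall out by a uniform trick. Since $U_\theta h = (I-P_\theta)(zh)$ is orthogonal to $P_\theta(zh)$, Pythagoras combined with (i) gives
\[
\|U_\theta h\|^2 = \|h\|^2 - \|P_\theta(zh)\|^2 = \|h\|^2 - (1-|\theta_0|^2)\,|\langle h,\bar z\rangle|^2,
\]
using $\|k_0^\theta\|^2 = k_0^\theta(0) = 1-|\theta_0|^2$. This shows $\langle(I-U_\theta^*U_\theta)h,h\rangle = \langle(1-|\theta_0|^2)\,\bar z\otimes\bar z\, h,h\rangle$ for every $h \in K_\theta^\perp$, and self-adjointness of both sides together with polarization upgrades the equality of quadratic forms to the operator identity (iii). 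The parallel argument with $U_\theta^*$ in place of $U_\theta$, using (ii) and the identity $\|\theta - \theta_0\|_{L^2}^2 = 1-|\theta_0|^2$ (a consequence of $|\theta|=1$ a.e.\ and $\langle\theta,1\rangle = \theta_0$), yields (iv).

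No genuine conceptual obstacle is expected; the difficulty is purely careful bookkeeping, chiefly remembering that $P_\theta$ acts on all of $L^2$ via $P_\theta f = Pf - \theta P(\bar\theta f)$ (not just on $H^2$) and tracking the two summands of $K_\theta^\perp$ separately when applying this formula to $zh$ and to $\bar z h$.
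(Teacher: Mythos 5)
Your argument is correct and complete: the computation of $P_\theta(zh)=\langle h,\bar z\rangle(1-\overline{\theta_0}\theta)$ and $P_\theta(\bar z h)=\langle h,\theta\rangle\,\bar z(\theta-\theta_0)$ via $P_\theta=P-M_\theta P M_{\bar\theta}$ is right, and the passage from the norm identities $\|1-\overline{\theta_0}\theta\|^2=\|\theta-\theta_0\|^2=1-|\theta_0|^2$ to (iii) and (iv) by polarization is sound. Note, however, that the paper offers no proof of this lemma at all --- it is imported verbatim from Gu's Lemma 2.1 and 2.2 in \cite{CG1} --- so there is nothing internal to compare against; your write-up is a valid self-contained verification of that cited result.
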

	
	Next, we present the invariant subspaces of $U_\theta$, as obtained by Camara and Ross \cite{CR}, and independently by Li et al.~\cite{LSD}.

	\begin{theorem}\label{Th}
		Let $\theta$ be an inner function, and let $\mathcal{F}$ be a non-trivial $U_{\theta}$ invariant subspace of $K_\theta^{\perp}$ of the form $\mathcal{F} = X_- \oplus Y_+$, where $ X_- $ is closed subspace of $\overline{H_0^2}$ and $Y_+$ is a closed subspace of $\theta H^2$.
		\vspace{0.1in}

		(i) If $\theta_0 \neq 0,$ then $\mathcal{F}$ takes one of the forms: $\gamma \theta H^2$ or $ \overline{ zK_\alpha } \oplus \theta H^2, $ where $\gamma$ and $\alpha$ are inner functions.
		\vspace{0.1in}
		
		(ii) If $\theta_0 = 0,$ then $\mathcal{F}$ takes one of the forms: $ \overline{H_0^2}, \overline{zK_\alpha}, \gamma \theta H^2, \overline{H_0^2 } \oplus \gamma \theta H^2 $ or $\overline{zK_\alpha} \oplus \gamma \theta H^2$, where $\alpha$ and $\gamma$ are inner functions.
	\end{theorem}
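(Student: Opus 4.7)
The plan is to exploit the explicit formula for $U_\theta$ from Lemma~\ref{Glemma}(i) against the orthogonal decomposition $K_\theta^\perp = \overline{H_0^2}\oplus\theta H^2$. For $y\in\theta H^2$ one has $\langle y,\bar z\rangle = 0$, hence $U_\theta y = zy \in z\theta H^2\subset\theta H^2$; while for $x\in\overline{H_0^2}$ the formula rewrites as
\[
U_\theta x \;=\; \bigl(zx-\langle x,\bar z\rangle\bigr) \;+\; \langle x,\bar z\rangle\,\overline{\theta_0}\,\theta,
\]
in which the first summand lies in $\overline{H_0^2}$ and the second in $\theta H^2$. Writing $Vx := zx - \langle x,\bar z\rangle$ for the resulting operator on $\overline{H_0^2}$ and feeding in the ansatz $\mathcal{F} = X_-\oplus Y_+$, the orthogonality $\overline{H_0^2}\perp\theta H^2$ converts the invariance $U_\theta\mathcal{F}\subset\mathcal{F}$ into three conditions: (a) $Y_+$ is $M_z$-invariant inside $\theta H^2$; (b) $VX_-\subset X_-$; (c) $\langle x,\bar z\rangle\,\overline{\theta_0}\,\theta\in Y_+$ for every $x\in X_-$.

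Condition (a) combined with Beurling's theorem gives $Y_+=\gamma\theta H^2$ for some inner $\gamma$, or $Y_+=\{0\}$. To handle (b), I would show that $V$ on $\overline{H_0^2}$ is anti-unitarily equivalent to the backward shift $S^*$ on $H^2$ via the conjugate-linear isometry $J\colon f\mapsto \bar z\bar f$, by checking directly on the bases $\{z^k\}$ and $\{\bar z^{k+1}\}$ that $VJ = JS^*$ and that $J(K_\alpha)=\overline{zK_\alpha}$. Combined with the standard classification of $S^*$-invariant subspaces, this shows that the $V$-invariant subspaces of $\overline{H_0^2}$ are exactly $\{0\}$, $\overline{zK_\alpha}$ for inner $\alpha$, and $\overline{H_0^2}$ itself.

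The argument then splits on the value of $\theta_0$. If $\theta_0=0$, condition (c) is vacuous, so $X_-$ and $Y_+$ may be chosen independently from the classifications above; enumerating the non-trivial products reproduces the five shapes listed in~(ii). If $\theta_0\neq 0$ and $X_-\neq\{0\}$, I would first argue that some $x\in X_-$ has $\langle x,\bar z\rangle\neq 0$: otherwise $X_-\subset\overline{z^2H^2}$, and iterating $VX_-\subset X_-$ pushes $X_-$ successively into $\overline{z^nH^2}$ for every $n$, forcing $X_-\subset\bigcap_n\overline{z^nH^2}=\{0\}$. Condition (c) then forces $\theta\in Y_+$, which pins $\gamma$ down to a unimodular constant and yields $Y_+=\theta H^2$; together with the two possible shapes of $X_-$ this reproduces the forms in~(i). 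The main technical point is the identification $V\simeq S^*$ together with the image relation $J(K_\alpha)=\overline{zK_\alpha}$; once this is in hand the remainder is careful bookkeeping across the two cases of $\theta_0$.
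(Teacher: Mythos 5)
Your proposal is correct, but note that the paper itself gives no proof of this statement: Theorem~\ref{Th} is imported verbatim from C\^amara--Ross \cite{CR} and Li--Sang--Ding \cite{LSD}, so there is no in-text argument to match against. What you have written is a self-contained and valid reconstruction. Your key observation --- that by Lemma~\ref{Glemma}(i) the operator $U_\theta$ acts on $\theta H^2$ as $M_z$ and sends $x\in\overline{H_0^2}$ to $\bigl(zx-\langle x,\bar z\rangle\bigr)+\langle x,\bar z\rangle\,\overline{\theta_0}\,\theta$, so that a splitting subspace is invariant iff (a) $zY_+\subset Y_+$, (b) $VX_-\subset X_-$, and (c) $\langle x,\bar z\rangle\overline{\theta_0}\theta\in Y_+$ --- is exactly the mechanism the paper later develops for the case $\theta_0=0$ in Theorem~\ref{TH} and its corollary, where your $V$ appears as the operator $\mathbb{S}^*$ and the classification of its invariant subspaces as $\{0\}$, $\overline{zK_\alpha}$, $\overline{H_0^2}$ is recorded via the anti-unitary equivalence with $S^*$. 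Your treatment of the case $\theta_0\neq 0$ goes beyond what the paper spells out: the argument that $X_-\neq\{0\}$ forces some $x$ with $\langle x,\bar z\rangle\neq 0$ (else $X_-\subset\bigcap_n\overline{z^nH^2}=\{0\}$ by iterating $V$), hence $\theta\in Y_+$ and $Y_+=\theta H^2$, is correct and accounts for why the $\theta_0\neq0$ list is shorter. All steps check out, including the use of Beurling's theorem \cite{AB} for $Y_+$ and the intertwining $VJ=JS^*$ with $J(K_\alpha)=\overline{zK_\alpha}$; the bookkeeping of the nontrivial combinations reproduces both lists in the statement.
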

	
	As an application of the above theorem, we obtain the structure of nearly $U_\theta$ invariant subspaces, when $\theta_0 \neq 0.$
	
	\begin{theorem}
		Let $\theta$ be an inner function with $\theta_0 \neq 0$. If $\mathcal{M}$ is a non-trivial nearly $U_\theta$ invariant subspace of $K_\theta^{\perp}$ of the form $\mathcal{M} = \mathcal{M_-} \oplus \mathcal{M_+}$, where $ \mathcal{M_-} $ is closed subspace of $\overline{H_0^2}$ and $\mathcal{M_+}$ is a closed subspace of $\theta H^2$, then $\mathcal{M}$ is either $\gamma \theta H^2$ or $ \overline{ zK_\alpha } \oplus \theta H^2, $ where $\gamma$ and $\alpha$ are inner functions.
	\end{theorem}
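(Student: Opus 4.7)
My plan is to show that when $\theta_0 \neq 0$, the near $U_\theta$-invariance hypothesis is equivalent to genuine $U_\theta$-invariance, and then invoke Theorem~\ref{Th}(i) verbatim. The key step is to verify that $U_\theta^*$ has full range on $K_\theta^\perp$.

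To this end, I would start from Lemma~\ref{Glemma}(iii), which gives
\[ U_\theta^* U_\theta = I - (1-|\theta_0|^2)\, \bar z \otimes \bar z \]
on $K_\theta^\perp$. This is a self-adjoint rank-one perturbation of the identity whose spectrum is $\{|\theta_0|^2, 1\}$: the eigenvalue $|\theta_0|^2$ is attained on the one-dimensional span of $\bar z$, and the eigenvalue $1$ on its orthogonal complement in $K_\theta^\perp$. When $\theta_0 \neq 0$, this operator is invertible, hence $U_\theta$ is bounded below and $U_\theta^*$ is surjective. Consequently $U_\theta^* K_\theta^\perp = K_\theta^\perp$, and the defining condition $U_\theta(\mathcal{M} \cap U_\theta^* K_\theta^\perp) \subset \mathcal{M}$ collapses to $U_\theta \mathcal{M} \subset \mathcal{M}$.

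Once $\mathcal{M}$ is known to be $U_\theta$-invariant with the splitting $\mathcal{M}_- \oplus \mathcal{M}_+$, part (i) of Theorem~\ref{Th} directly yields the two admissible forms $\gamma\theta H^2$ or $\overline{zK_\alpha} \oplus \theta H^2$ for inner functions $\gamma, \alpha$, which is exactly the claim.

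The argument is essentially a one-line observation, and the main obstacle is conceptual rather than technical: recognizing that the hypothesis $\theta_0 \neq 0$ is precisely what makes the rank-one defect $(1-|\theta_0|^2)\bar z \otimes \bar z$ non-degenerate, eliminating any slack between near-invariance and invariance. This also clarifies why the complementary case $\theta_0 = 0$ requires genuinely new work via Hitt's algorithm in Section~3, since there $U_\theta^* K_\theta^\perp$ becomes a proper subspace of $K_\theta^\perp$ and near $U_\theta$-invariance is strictly weaker than $U_\theta$-invariance.
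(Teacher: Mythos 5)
Your proposal is correct and follows essentially the same route as the paper: both reduce near $U_\theta$-invariance to genuine $U_\theta$-invariance via the surjectivity of $U_\theta^*$ on $K_\theta^\perp$ and then invoke Theorem~\ref{Th}(i). In fact you supply a justification (invertibility of $U_\theta^*U_\theta = I - (1-|\theta_0|^2)\,\bar z\otimes\bar z$ from Lemma~\ref{Glemma}(iii)) for the identity $U_\theta^*K_\theta^\perp = K_\theta^\perp$ that the paper simply asserts.
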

	
	\begin{proof}
		First note that $U_\theta^* ( K_\theta ^\perp )=K_\theta ^\perp= U_\theta ( K_\theta ^\perp ) $. Therefore, by Definition 1.2, we have 
		$$ U_\theta ( \mathcal{M} \cap U_\theta^* K_\theta ^\perp ) \subset \mathcal{M} \implies U_\theta ( \mathcal{M} \cap K_\theta ^\perp ) \subset \mathcal{M} \implies U_\theta \mathcal{M} \subset \mathcal{M}.$$ Hence, by the above-mentioned theorem, we get $ \mathcal{M} = \gamma \theta H^2$ or $ \overline{ zK_\alpha } \oplus \theta H^2 ,$ where $\gamma$ and $\alpha$ are inner functions.
	\end{proof}
	
	Now, if we replace $U_{\theta}$ by $U_{\theta}^*$ in the above theorem, we have the following.
	\begin{corollary}
		Let $\theta$ be an inner function with $\theta_0 \neq 0$. If $\mathcal{M}$ is a non-trivial nearly $U_\theta^*$-invariant subspace of $K_\theta^{\perp}$ of the form $\mathcal{M} = \mathcal{M_-} \oplus \mathcal{M_+}$, where $ \mathcal{M_-} $ is closed subspace of $\overline{H_0^2}$ and $\mathcal{M_+}$ is a closed subspace of $\theta H^2$, then $\mathcal{M}$ is either $\theta K_\gamma \oplus \overline{H_0^2}$ or $ \overline{\alpha H_0^2}, $ where $\gamma$ and $\alpha$ are inner functions.
	\end{corollary}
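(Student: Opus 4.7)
The plan is to mirror the proof of the preceding theorem almost verbatim: first reduce nearly $U_\theta^*$-invariance to genuine $U_\theta^*$-invariance, then pass to the orthogonal complement inside $K_\theta^\perp$ to land inside the scope of Theorem~\ref{Th}(i), and finally read the two answers off by taking complements.

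First, I would use Lemma~\ref{Glemma}(iii)--(iv): the operators $U_\theta^*U_\theta = I-(1-|\theta_0|^2)\,\bar z \otimes \bar z$ and $U_\theta U_\theta^* = I-(1-|\theta_0|^2)\,\theta\otimes\theta$ are rank-one perturbations of the identity whose only non-unit eigenvalue is $|\theta_0|^2$. Since $\theta_0\neq 0$, both are invertible on $K_\theta^\perp$, so $U_\theta$ and $U_\theta^*$ are surjective and in particular $U_\theta K_\theta^\perp = K_\theta^\perp$. The defining condition $U_\theta^*(\mathcal{M}\cap U_\theta K_\theta^\perp)\subset\mathcal{M}$ then collapses to $U_\theta^*\mathcal{M}\subset\mathcal{M}$, i.e., $\mathcal{M}$ is $U_\theta^*$-invariant in the ordinary sense.

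Next I would set $\mathcal{N}:=K_\theta^\perp\ominus\mathcal{M}$. Because $U_\theta^*$ preserves $\mathcal{M}$, the operator $U_\theta$ preserves $\mathcal{N}$; and because $\mathcal{M}=\mathcal{M}_-\oplus\mathcal{M}_+$ splits along $K_\theta^\perp=\overline{H_0^2}\oplus\theta H^2$, so does $\mathcal{N}=X_-\oplus Y_+$ with $X_-=\overline{H_0^2}\ominus\mathcal{M}_-$ and $Y_+=\theta H^2\ominus\mathcal{M}_+$. Theorem~\ref{Th}(i) then forces $\mathcal{N}$ to be either $\gamma\theta H^2$ or $\overline{zK_\alpha}\oplus\theta H^2$ for suitable inner functions.

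Finally I would take complements. In the first case one gets $\mathcal{M}_-=\overline{H_0^2}$ and $\mathcal{M}_+=\theta H^2\ominus\gamma\theta H^2=\theta K_\gamma$, whence $\mathcal{M}=\theta K_\gamma\oplus\overline{H_0^2}$. In the second, $\mathcal{M}_+=\{0\}$ and $\mathcal{M}_-=\overline{H_0^2}\ominus\overline{zK_\alpha}$. The one small identification I would have to actually verify — and the only place any real work happens — is that this last complement equals $\overline{\alpha H_0^2}$; this follows by applying the antiunitary conjugation $g\mapsto\bar g$, which sends $H^2$ onto $\overline{H^2}$ and carries $K_\alpha\subset H^2$ onto $\overline{zK_\alpha}\subset\overline{H_0^2}$ (after multiplying by $\bar z$), and then using $H^2\ominus K_\alpha=\alpha H^2$. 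No substantive obstacle remains; the entire argument is a dualization of the preceding proof.
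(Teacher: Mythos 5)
Your argument is correct and is exactly the route the paper intends (the paper leaves this corollary to the one-line remark that one ``replaces $U_\theta$ by $U_\theta^*$'' in the preceding theorem): near-invariance collapses to genuine $U_\theta^*$-invariance because $U_\theta K_\theta^\perp = K_\theta^\perp$ when $\theta_0\neq 0$, and the classification then follows by taking orthogonal complements of the two forms in Theorem~\ref{Th}(i). Your verification that $\overline{H_0^2}\ominus\overline{zK_\alpha}=\overline{\alpha H_0^2}$ via the antiunitary map $g\mapsto\bar z\bar g$ correctly supplies the one detail the paper leaves implicit.
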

	We next define an operator on $\overline{H_0^2}$ by  
	$\mathbb{S}: \overline{H_0^2} \to \overline{H_0^2}, 
	\quad \mathbb{S}(\bar{h}) = \bar{z}\,\bar{h}, \,\bar{h} \in \overline{H_0^2}.$
	Its adjoint is given by  
	$
	\mathbb{S}^* : \overline{H_0^2} \to \overline{H_0^2}, 
	\quad \mathbb{S}^*(\bar{h}) = (I-P)(z \bar{h}),
	$
	where $P$ denotes the orthogonal projection onto $H^2$. 
	It is straightforward to verify that the operators $\mathbb{S}$ and $\mathbb{S}^*$ are unitarily equivalent to the classical shift $S$ and the backward shift $S^*$, respectively, acting on $H^2$. 
	Consequently, the nontrivial invariant subspaces of $\mathbb{S}$ and $\mathbb{S}^*$ in $\overline{H_0^2}$ are of the form 
	$
	\overline{\psi H_0^2} 
	\, \, \text{and} \,\,
	\overline{z K_\xi},
	$
	respectively, where $\psi$ and $\xi$ are inner functions. 
	
	Similarly, we define the operator  
	$
	\mathscr{S} : \overline{H^2} \to \overline{H^2}, 
	\, \mathscr{S}(\bar{h}) = \bar{z}\,\bar{h}, 
	\, \bar{h} \in \overline{H^2}.
	$
	Its adjoint is given by  
	$
	\mathscr{S}^*(\bar{h}) = z\bigl(\bar{h} - \bar{h}_0 \bigr),
	$
	where $\bar{h}_0$ denotes the constant term of $\bar{h}$. 
	It follows that the invariant subspaces of $\mathscr{S}$ are precisely of the form  
	$
	\overline{\alpha H^2},
	$
	while the invariant subspaces of $\mathscr{S}^*$ are of the form  
	$
	\overline{K_\alpha},
	$
	for some inner function $\alpha$.
	
	In what follows, we present another characterization of the dual compressed shift operator $U_\theta$.
	
	\begin{theorem}\label{TH}
		Let $T$ be an operator acting on $K_\theta^\perp$ as $ T(h)= \begin{cases} Sh , & h \in \theta H^2, \\ \mathbb{S}^*h , & h \in \overline{H_0^2}.\end{cases}$ Now, if $\theta_0 = 0,$ then $U_\theta = T.$
	\end{theorem}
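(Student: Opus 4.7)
The plan is to reduce the identity $U_\theta = T$ to a direct computation on the two summands of the orthogonal decomposition $K_\theta^\perp = \overline{H_0^2} \oplus \theta H^2$, using the explicit formula for $U_\theta$ from Lemma~\ref{Glemma}(i). Under the hypothesis $\theta_0 = 0$, that formula collapses to
\[
U_\theta(h) \;=\; zh \;-\; \langle h, \bar{z}\rangle,
\]
so everything reduces to understanding the Fourier coefficient $\langle h,\bar{z}\rangle = \hat{h}(-1)$ and the subsequent projection.

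First I would treat $h \in \theta H^2$. Since $\theta_0 = 0$, the inner function $\theta$ has vanishing constant term, hence $\theta H^2 \subset z H^2$; equivalently, every $h \in \theta H^2$ has $\hat{h}(n) = 0$ for all $n \le 0$. In particular $\langle h, \bar z\rangle = \hat{h}(-1) = 0$, and the simplified formula for $U_\theta$ yields $U_\theta(h) = zh = Sh$, which lies in $\theta H^2$ since $S$ preserves this subspace. This matches the action of $T$ on $\theta H^2$ by definition.

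Next I would treat $h \in \overline{H_0^2}$, where $\hat{h}(n) = 0$ for all $n \ge 0$. Writing $zh$ in Fourier expansion, its constant term equals $\hat{h}(-1) = \langle h,\bar{z}\rangle$, while all positive Fourier coefficients vanish. Consequently
\[
(I - P)(zh) \;=\; zh \;-\; P(zh) \;=\; zh \;-\; \langle h,\bar{z}\rangle,
\]
which is precisely $U_\theta(h)$ by the simplified formula, and is precisely $\mathbb{S}^* h$ by the definition of $\mathbb{S}^*$ on $\overline{H_0^2}$. Again the image lies in $\overline{H_0^2}$, consistent with $T$ preserving that summand.

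Since $U_\theta$ and $T$ agree on both summands of the orthogonal decomposition $K_\theta^\perp = \overline{H_0^2} \oplus \theta H^2$, and both are linear, they agree on all of $K_\theta^\perp$, giving $U_\theta = T$. There is no substantive obstacle: the proof is essentially a bookkeeping of Fourier coefficients combined with the collapse of Lemma~\ref{Glemma}(i) when $\theta_0 = 0$. The only conceptual point to notice is that $\theta_0 = 0$ forces $\theta H^2 \subset zH^2$, which is what makes the extra rank-one correction term in $U_\theta$ vanish on the analytic summand.
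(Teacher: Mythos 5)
Your proof is correct and follows essentially the same route as the paper: both reduce to Lemma~\ref{Glemma}(i) with $\theta_0=0$ and verify the identity separately on the two summands of $K_\theta^\perp=\overline{H_0^2}\oplus\theta H^2$. One small misattribution in your closing remark: the correction term vanishes on $\theta H^2$ simply because $\langle h,\bar z\rangle=\hat h(-1)=0$ for every $h\in H^2$ (no need for $\theta H^2\subset zH^2$); the hypothesis $\theta_0=0$ is used instead to collapse $\overline{\theta_0}\,\theta-1$ to the constant $-1$, which is what makes the rank-one term act as the constant-term subtraction on the $\overline{H_0^2}$ summand.
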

	
	\begin{proof}
		Let $h \in K_\theta ^\perp$ be such that $h= \theta h_+ + h_-$, where $h_+ \in H^2$ and $h_- \in \overline{H_0^2}$. Now if $\theta_0=0,$ then by using Lemma~\ref{Glemma} we have
		\begin{align*}
		& U_\theta h = zh +\langle h, \bar{z} \rangle ( 0-1 ) \\
		& = zh- \langle zh, 1 \rangle \\
		& = z\theta h_+ + zh_- - \langle z\theta h_+ + zh_-, 1 \rangle,\\
		& = z\theta h_+ + ( zh_- - \langle h_-, \bar{z} \rangle ) = T(h), \text{ }
		\end{align*} 
		and hence proved.
	\end{proof}
	As a consequence of the above fact, we can acquire $U_\theta$ invariant subspaces of the form $\mathcal{M}= \mathcal{M}_+ \oplus \mathcal{M}_- $ for the case $\theta_0=0$. 
	\begin{corollary}
		In the case $\theta_0=0,$ the $U_\theta$ invariant subspaces are merely of the form: \big($S$ invariant subspaces of $\theta H^2$\big) $\oplus$ \big($\mathbb{S}^*$ invariant subspaces of $ \overline{H_0^2}\big).$
	\end{corollary}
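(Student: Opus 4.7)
My plan is to deduce the corollary directly from Theorem~\ref{TH}, which, under $\theta_0=0$, identifies $U_\theta$ with the operator $T$ acting as $S$ on the summand $\theta H^2$ and as $\mathbb{S}^*$ on the summand $\overline{H_0^2}$ of the orthogonal decomposition $K_\theta^\perp=\theta H^2\oplus\overline{H_0^2}$. The statement should be read inside the split framework adopted throughout the paper: the introduction explicitly flags, via the Camara--Ross and Li et al.\ examples, that an arbitrary $U_\theta$-invariant subspace need not decompose as $\mathcal{M}_+\oplus\mathcal{M}_-$, so the content of the corollary is the biconditional that a subspace of the form $\mathcal{M}=\mathcal{M}_+\oplus\mathcal{M}_-$ with $\mathcal{M}_+\subset\theta H^2$ and $\mathcal{M}_-\subset\overline{H_0^2}$ is $U_\theta$-invariant if and only if $\mathcal{M}_+$ is $S$-invariant and $\mathcal{M}_-$ is $\mathbb{S}^*$-invariant. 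I will carry out both directions of this equivalence in turn.

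For the \emph{sufficiency} direction I would take any $h=h_++h_-\in\mathcal{M}_+\oplus\mathcal{M}_-$ and apply Theorem~\ref{TH} termwise to obtain $U_\theta h=S(h_+)+\mathbb{S}^*(h_-)$; by the assumed invariance of the two components this lies in $\mathcal{M}_++\mathcal{M}_-=\mathcal{M}$, yielding $U_\theta$-invariance of $\mathcal{M}$ on the spot.

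The \emph{necessity} direction is where the splitting hypothesis earns its keep. Assuming $\mathcal{M}=\mathcal{M}_+\oplus\mathcal{M}_-$ is $U_\theta$-invariant, I would fix $h_+\in\mathcal{M}_+\subset\theta H^2$; Theorem~\ref{TH} gives $U_\theta(h_+)=zh_+$, which remains in $\theta H^2$ because $\theta H^2$ is $S$-invariant. Writing $U_\theta(h_+)\in\mathcal{M}$ in its unique $\mathcal{M}_+$-plus-$\mathcal{M}_-$ decomposition and comparing orthogonal supports on $\theta H^2$ versus $\overline{H_0^2}$, the $\mathcal{M}_-$-component must vanish, so $S(h_+)=U_\theta(h_+)\in\mathcal{M}_+$, which gives $S(\mathcal{M}_+)\subset\mathcal{M}_+$. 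The symmetric argument, applied to $h_-\in\mathcal{M}_-$ and using that $U_\theta(h_-)=\mathbb{S}^*(h_-)$ still sits in $\overline{H_0^2}$ because $\overline{H_0^2}$ is $\mathbb{S}^*$-invariant, delivers $\mathbb{S}^*(\mathcal{M}_-)\subset\mathcal{M}_-$.

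The only delicate point is the orthogonal-support comparison in the necessity step: without the splitting hypothesis, images $U_\theta h$ of general elements $h\in\mathcal{M}$ may mix across $\theta H^2$ and $\overline{H_0^2}$, and it is exactly this mixing that produces the non-split examples cited from \cite{CR,LSD}. With the splitting in place, however, Theorem~\ref{TH} decouples everything into two independent pieces and the argument reduces to two routine checks, with no spectral or functional-calculus input required.
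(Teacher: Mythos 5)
Your proposal is correct and is essentially the argument the paper intends: the corollary is stated without proof as an immediate consequence of Theorem~\ref{TH}, and the intended reasoning is exactly your decoupling of $U_\theta$ into $S$ on $\theta H^2$ and $\mathbb{S}^*$ on $\overline{H_0^2}$, with the orthogonality of the two summands forcing each component of a split invariant subspace to be invariant separately. Your reading of the statement as applying only to subspaces of the form $\mathcal{M}_+\oplus\mathcal{M}_-$ matches the paper's framework, and the necessity step via comparison of components in $\theta H^2\cap\overline{H_0^2}=\{0\}$ is the detail the paper leaves implicit.
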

	
	\section{Main Results}
	In this section, we investigate the structure of nearly $U_\theta$ -invariant subspaces of the form  
	$ \mathcal{M} = \mathcal{M}_- \oplus \mathcal{M}_+,$
	in the case when $\theta_0 = 0$. Our approach proceeds in parallel with the elegant proof of Hitt’s theorem \cite{DH}, which has appeared in various contexts across the literature (see, for instance, \cite{CCP, CGP, FM, GMR, DH, SAR2}).  
	
	Now, by the definition of a nearly $U_\theta$-invariant subspace of $K_\theta^\perp$, we have  
	$
	U_\theta \bigl( \mathcal{M} \cap U_\theta^* K_\theta^\perp \bigr) \subset \mathcal{M}.
	$
	Since 
	$
	U_\theta^* K_\theta^\perp = K_\theta^\perp \ominus \bigvee \{\bar{z}\},
	$
	it follows that  
	$
	U_\theta \bigl( \mathcal{M} \cap \{\bar{z}\}^\perp \bigr) \subset \mathcal{M}.
	$  
	Thus, this notion is directly analogous to the definitions \eqref{eqd1} and \eqref{eqd3} introduced earlier.
	
	A natural question arises: is it possible to have a nearly $U_\theta$-invariant subspace $\mathcal{M}$ such that  
	$
	\mathcal{M} \subset K_\theta^\perp \ominus \big(\bigvee \{\bar{z}\}\big)\, ?
	$
	
	The answer is affirmative, unlike the case of nearly $S^*$-invariant subspaces of the Hardy space $H^2$.  
	\begin{theorem}
		Let $\mathcal{M}$ be a non-trivial nearly $U_\theta$-invariant subspace that is properly contained in 
		$
		K_\theta^\perp \ominus \big(\bigvee \{\bar{z}\}\big).
		$
		Then $\mathcal{M}$ is of the form 
		$
		\mathcal{M} = \alpha \, \theta H^2,
		$
		where $\alpha$ is an inner function.
	\end{theorem}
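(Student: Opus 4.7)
The plan is to show that the containment $\mathcal{M}\subset K_\theta^\perp\ominus\bigvee\{\bar{z}\}$ is strong enough to (a) upgrade near-invariance to genuine $U_\theta$-invariance and (b) squeeze $\mathcal{M}$ entirely into $\theta H^2$; Beurling's theorem then closes the argument. The first observation is that this ambient subspace coincides with $U_\theta^* K_\theta^\perp$ (as noted just before the statement), so $\mathcal{M}\cap U_\theta^*K_\theta^\perp=\mathcal{M}$ and the near-invariance condition collapses to $U_\theta\mathcal{M}\subset\mathcal{M}$. In particular $U_\theta^k\mathcal{M}\subset\mathcal{M}$, so each $U_\theta^k f$ remains orthogonal to $\bar{z}$, for every $k\ge 0$.

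Next I would invoke Theorem~\ref{TH} (recall $\theta_0=0$) to split the action of $U_\theta$. Writing $f=f_-+f_+\in\mathcal{M}$ with $f_-\in\overline{H_0^2}$ and $f_+\in\theta H^2$, the theorem gives $U_\theta^k f=(\mathbb{S}^*)^k f_- + z^k f_+$, a decomposition that respects the splitting $\overline{H_0^2}\oplus\theta H^2$. Since $z^k f_+\in H^2$ is automatically orthogonal to $\bar{z}$, the orthogonality of $U_\theta^k f$ to $\bar{z}$ forces $(\mathbb{S}^*)^k f_-\perp\bar{z}$ for every $k\ge 0$. Expanding $f_-=\sum_{n\ge 1}a_n\bar{z}^n$ (with $a_1=0$ by hypothesis) and using $\mathbb{S}^*(\bar{z}^n)=\bar{z}^{n-1}$ for $n\ge 2$ together with $\mathbb{S}^*(\bar{z})=0$, a direct computation shows that the coefficient of $\bar{z}$ in $(\mathbb{S}^*)^k f_-$ equals $a_{k+1}$. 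Hence $a_n=0$ for all $n\ge 1$, so $f_-=0$, giving $\mathcal{M}\subset\theta H^2$.

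Finally, since $U_\theta$ restricted to $\theta H^2$ coincides with the classical shift $S$ (again by Theorem~\ref{TH}), $\mathcal{M}$ is an $S$-invariant subspace of $H^2$. Beurling's theorem produces an inner function $\beta$ with $\mathcal{M}=\beta H^2$, and the inclusion $\beta H^2\subset\theta H^2$ forces $\theta\mid\beta$, so $\beta=\alpha\theta$ for some inner $\alpha$, yielding $\mathcal{M}=\alpha\theta H^2$. I expect the main obstacle to be the iterative orthogonality step in the middle paragraph: one must exploit the fact that $U_\theta$ preserves the decomposition $\overline{H_0^2}\oplus\theta H^2$ (which is exactly the content of Theorem~\ref{TH}) and then track the shifting action of $\mathbb{S}^*$ carefully so that the orthogonality conditions indeed exhaust every Fourier coefficient of $f_-$. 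Everything else is standard: the collapse of near-invariance is immediate from the hypothesis, and Beurling's theorem supplies the final form without further work.
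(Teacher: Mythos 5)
Your proof is correct and takes essentially the same route as the paper: collapse near-invariance to genuine $U_\theta$-invariance using $\mathcal{M}\subset U_\theta^*K_\theta^\perp$, iterate $U_\theta$ via the splitting $U_\theta^k f=(\mathbb{S}^*)^k f_-+z^k f_+$ from Theorem~\ref{TH} to force $f_-=0$, and finish with Beurling's theorem. The only differences are that you make explicit the Fourier-coefficient bookkeeping behind ``$(\mathbb{S}^*)^k f_-\perp\bar{z}$ for all $k$ implies $f_-=0$,'' which the paper states without detail, and that your version of this step does not actually need the standing assumption that $\mathcal{M}$ splits as $\mathcal{M}_-\oplus\mathcal{M}_+$.
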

	
	\begin{proof}
		Suppose, $\mathcal{M}$ is fully contained in $K_\theta^\perp \ominus  \big(\bigvee \{\bar{z}\}\big)$. Then $f\in \mathcal{M} = \mathcal{M}_- \oplus \mathcal{M}_+ \implies f= f_- \oplus \theta f_+$. Therefore, $ U_\theta^n  f = {\mathbb{S}^*}^nf_- \oplus S^n\theta f_+ \in \mathcal{M} $ for every $n\in \mathbb{N}$. This implies that $f_- = 0,$ and hence $\mathcal{M}_- = \{0\}.$ Thus, $\mathcal{M}= \mathcal{M}_+$ and in this case $ U_\theta \mathcal{M} \subset \mathcal{M} \implies S\mathcal{M_+} \subset \mathcal{M_+} \implies \mathcal{M} = \alpha \theta H^2, $ where $\alpha$ is an inner function (by Beurling's theorem\cite{AB}).
	\end{proof}
	Now, a second natural question arises: what can be said if 
	$
	\mathcal{M} \not\subset K_\theta^\perp \ominus \bigvee \{\bar{z}\}\,?
	$
	\begin{lemma} \label{L1}
		If $\mathcal{M}$ is a nearly $U_\theta$ -invariant subspace of $ K_\theta^\perp $ that is not fully contained in $K_\theta^\perp \ominus \big(\bigvee \{\bar{z}\}\big)$, then $( \mathcal{M} \ominus (\mathcal{M} \cap \{\bar{z}\}^\perp) )$ is a one-dimensional space.
	\end{lemma}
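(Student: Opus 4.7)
The plan is to recognize the statement as an elementary Hilbert space fact: $\mathcal{M} \cap \{\bar{z}\}^\perp$ is precisely the kernel of a bounded linear functional on $\mathcal{M}$, and the hypothesis that $\mathcal{M}$ is not fully contained in $K_\theta^\perp \ominus \bigvee\{\bar{z}\}$ guarantees that this functional is nonzero, forcing the orthogonal complement of its kernel in $\mathcal{M}$ to have dimension exactly one.

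Concretely, I would first define the functional $L \colon \mathcal{M} \to \mathbb{C}$ by $L(f) = \langle f, \bar{z}\rangle$. Since $\bar{z}\in \overline{H_0^2}\subset K_\theta^\perp$ and $\mathcal{M}\subset K_\theta^\perp$, this is a well-defined bounded linear functional on $\mathcal{M}$ (with norm at most $\|\bar{z}\|=1$), and by definition $\ker L = \mathcal{M} \cap \{\bar{z}\}^\perp$. Next, I would unpack the hypothesis: the condition that $\mathcal{M}$ is not fully contained in $K_\theta^\perp \ominus \bigvee\{\bar{z}\}$ (equivalently, in $U_\theta^*K_\theta^\perp$, by Lemma~\ref{Glemma}(iii) with $\theta_0=0$) says exactly that there exists $f\in\mathcal{M}$ with $\langle f,\bar{z}\rangle\neq 0$, so $L$ is not identically zero.

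Therefore $L$ is a nonzero bounded linear functional on the Hilbert space $\mathcal{M}$, so $\ker L$ is a closed subspace of codimension one. Consequently
\[
\mathcal{M} \ominus (\mathcal{M}\cap \{\bar{z}\}^\perp) = \mathcal{M} \ominus \ker L
\]
is one-dimensional, spanned by $P_{\mathcal{M}}\bar{z}$, the orthogonal projection of $\bar{z}$ onto $\mathcal{M}$. No genuine obstacle is expected: the argument does not even invoke the nearly $U_\theta$-invariance of $\mathcal{M}$ beyond the fact that $\mathcal{M}$ is a closed subspace of $K_\theta^\perp$, and the near-invariance hypothesis will only enter in the subsequent stages of the Hitt-type algorithm where the generator $P_{\mathcal{M}}\bar{z}$ is used to extract the structural decomposition.
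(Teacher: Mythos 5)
Your proposal is correct and is essentially the same argument as the paper's: the paper proves by hand that the kernel of the functional $f \mapsto \langle f, \bar{z}\rangle$ has codimension one in $\mathcal{M}$ (by subtracting a suitable scalar multiple of a fixed unit vector $g$), while you simply invoke this standard fact directly. Your additional observations --- that the generator is $P_{\mathcal{M}}\bar{z}$ and that near-invariance is not actually used in this lemma --- are accurate and consistent with how the paper proceeds afterward.
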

	
	\begin{proof}
		Suppose that $g$ is an element of $ ( \mathcal{M} \ominus (\mathcal{M} \cap \{\bar{z}\}^\perp) ),$ with $||g||=1.$ So, $\langle g,\bar{z} \rangle \neq 0.$ Now, consider $ f$ is an arbitrary element of $ ( \mathcal{M} \ominus (\mathcal{M} \cap \{\bar{z}\}^\perp) )$. To show that $( \mathcal{M} \ominus (\mathcal{M} \cap \{\bar{z}\}^\perp) )$ is one-dimensional, we prove that $( \mathcal{M} \ominus (\mathcal{M} \cap \{\bar{z}\}^\perp) )= \mathbb{C} g.$ Since, $f\in ( \mathcal{M} \ominus (\mathcal{M} \cap \{\bar{z}\}^\perp) ),$ we have $\langle f, \bar{z} \rangle \neq 0.$ Now, consider the element $h= f- \frac{\langle f, \bar{z}\rangle}{\langle g, \bar{z}\rangle}g.$ Clearly $ h $ is an element of $ ( \mathcal{M} \ominus (\mathcal{M} \cap \{\bar{z}\}^\perp) ).$ But, $\langle h, \bar{z} \rangle =0,$ which implies that $ h \in ( \mathcal{M} \ominus (\mathcal{M} \cap \{\bar{z}\}^\perp) ) \cap (\mathcal{M} \cap \{\bar{z}\}^\perp) = \{0\}.$ Hence, every element of $( \mathcal{M} \ominus (\mathcal{M} \cap \{\bar{z}\}^\perp) )$ is just a scalar multiple of $g.$
	\end{proof}
	Note that  
	$
	\bigvee \{g\} = \mathcal{M} \ominus \bigl(\mathcal{M} \cap \{\bar{z}\}^\perp \bigr),
	$ 
	and therefore $g$ cannot lie entirely in $\theta H^2$. In this situation, one may further ask: is it possible that some component of $g$ belongs to $\theta H^2$, that is,  
	$
	P(g) \neq 0 \, ?
	$
	\vspace{0.1in}
	
	The following lemma provides a negative answer to this question.
	
	\begin{lemma} \label{L2}
		The element $g \in ( \mathcal{M} \ominus (\mathcal{M} \cap \{\bar{z}\}^\perp) )$ has no non-zero positive Fourier coefficients.
	\end{lemma}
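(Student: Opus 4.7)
The plan is to derive the lemma as an immediate consequence of the standing assumption in Section~3 that $\mathcal{M}$ splits as the orthogonal direct sum $\mathcal{M} = \mathcal{M}_- \oplus \mathcal{M}_+$, with $\mathcal{M}_- \subset \overline{H_0^2}$ and $\mathcal{M}_+ \subset \theta H^2$. The key observation I would exploit is that $\mathcal{M}_+$ already sits inside $\mathcal{M} \cap \{\bar{z}\}^\perp$, so $g$, being orthogonal to that intersection, cannot have any $\mathcal{M}_+$-component and must lie entirely in $\mathcal{M}_- \subset \overline{H_0^2}$. This will in fact give the stronger conclusion that every non-negative Fourier coefficient of $g$ vanishes.

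First I would verify the containment $\mathcal{M}_+ \subset \mathcal{M} \cap \{\bar{z}\}^\perp$: any $v \in \mathcal{M}_+$ belongs to $\theta H^2 \subset H^2$, so its Fourier coefficient at $z^{-1}$ is zero, i.e., $\langle v, \bar{z}\rangle = 0$. Next, writing an arbitrary $g \in \mathcal{M} \ominus (\mathcal{M}\cap\{\bar{z}\}^\perp)$ as $g = u + v$ with $u \in \mathcal{M}_-$ and $v \in \mathcal{M}_+$, I would compute
\[
0 \;=\; \langle g, v\rangle \;=\; \langle u, v\rangle + \langle v, v\rangle \;=\; \|v\|^2,
\]
using $\mathcal{M}_- \perp \mathcal{M}_+$ (which follows from $\overline{H_0^2} \perp \theta H^2$ in $L^2(\mathbb{T})$) together with the fact that $g$ is orthogonal to $\mathcal{M}\cap\{\bar{z}\}^\perp$, which contains $v$. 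Hence $v = 0$ and $g = u \in \mathcal{M}_- \subset \overline{H_0^2}$, which immediately yields $\langle g, z^n\rangle = 0$ for every integer $n \geq 0$.

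There is really no serious obstacle to this argument; the lemma is a bookkeeping consequence of the splitting hypothesis combined with the orthogonality $H^2 \perp \overline{H_0^2}$ in $L^2(\mathbb{T})$. What I find noteworthy is that neither Lemma~\ref{L1}, nor the nearly $U_\theta$-invariance of $\mathcal{M}$, nor even the hypothesis $\theta_0 = 0$, actually enters the proof of this particular statement. Those ingredients become essential only in the subsequent step, when one has to analyse how $U_\theta$ propagates $g$ through iteration in order to reconstruct the full Hitt-type structure of $\mathcal{M}$.
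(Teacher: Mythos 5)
Your proposal is correct and uses essentially the same mechanism as the paper: both arguments rest solely on the splitting $\mathcal{M}=\mathcal{M}_-\oplus\mathcal{M}_+$ and the fact that $\mathcal{M}_+\subset\theta H^2\subset\{\bar{z}\}^\perp$, forcing the $\mathcal{M}_+$-component of $g$ to vanish and placing $g$ in $\mathcal{M}_-\ominus(\mathcal{M}_-\cap\overline{zH_0^2})\subset\overline{H_0^2}$. The paper packages this via the lattice identity $(A\oplus B)\cap(C\oplus D)=(A\cap C)\oplus B$ rather than your direct computation $\langle g,v\rangle=\|v\|^2$, but the content is the same, and your side remarks (the conclusion is really about all non-negative coefficients, and neither near-invariance nor $\theta_0=0$ is needed here) are accurate.
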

	
	\begin{proof}
		Let $\mathcal{H}$ be a Hilbert space, and let $A, B, C, D$ be closed subspaces of $\mathcal{H}$ such that $A \perp D,  ~C \perp D, ~\text{and}~ B \subset D.$ Then, $ (A\oplus B ) \cap (C \oplus D) = (A \cap C) \oplus B. $ Note that, we have $ \mathcal{M} = \mathcal{M}_- \oplus \mathcal{M}_+,$ and hence, $ \mathcal{M} \cap \{\bar{z}\}^\perp = (\mathcal{M}_- \oplus \mathcal{M}_+) \cap (\overline{z H_0^2} \oplus \theta H^2).$ Considering, $ A= \mathcal{M}_-, B = \mathcal{M}_+, C = \overline{zH_0^2} , \text { and } D = \theta H^2, $  we have $ \mathcal{M} \cap \{\bar{z}\}^\perp = ( \mathcal{M}_- \cap \overline{zH_0^2}) \oplus \mathcal{M}_+.$ 
		Therefore, 
		\begin{align*}
		& g\in ( \mathcal{M} \ominus (\mathcal{M} \cap \{\bar{z}\}^\perp) )\\
		& \implies g \in \mathcal{M} \cap ( ( \mathcal{M}_- \cap \overline{zH_0^2}) \oplus \mathcal{M}_+ )^\perp\\
		& \implies g \in \mathcal{M}\cap (  (   \mathcal{M}_- \cap \overline{zH_0^2}  ) ^\perp \cap \mathcal{M}_+ ^ \perp  )\\
		& \implies g \in ( \mathcal{M} \cap \mathcal{M}_+^\perp ) \cap (  \mathcal{M}_- \cap \overline{zH_0^2} ) \\
		& \implies g \in (\mathcal{M}_- \cap  (\mathcal{M}_- \cap \overline{zH_0^2})) = (\mathcal{M}_- \ominus (\mathcal{M}_- \cap \overline{zH_0^2})) \subset \overline{H_0^2}.
		\end{align*}
		This completes the proof. 
	\end{proof}
	According to Lemma~\ref{L1}, if $\mathcal{M}$ is a non-trivial nearly $U_\theta$ -invariant subspace in the case $\theta_0 = 0$, then  
	$
	\mathcal{M} = \bigl(\mathcal{M} \cap \{\bar{z}\}^\perp \bigr) \oplus \bigl( \bigvee \{ g \} \bigr).
	$
	Moreover, Lemma~\ref{L2} ensures that $(I-P)(g) = g$. Hence, without loss of generality, we may take $g$ such that $\|g\| = 1$, and (define)  
	$
	(g)_{-1} := \langle g, \bar{z} \rangle \in \mathbb{R}.
	$ 
	With this setup, we arrive at the following observation.
	\begin{corollary}\label{C1}
		Let $\mathcal{M}$ be a non-trivial nearly $U_\theta$ -invariant subspace of the form $\mathcal{M}= \mathcal{M}_- \oplus \mathcal{M}_+$, and let $g$ be the unique element orthogonal to $(\mathcal{M} \cap \{\bar{z}\}^\perp)$ with unit norm such that $(g)_{-1}$ is a real number. Then, for any $h\in \mathcal{M},$ we have $ (h)_{-1}= \langle h, (g)_{-1}g \rangle. $
	\end{corollary}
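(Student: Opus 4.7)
The plan is to unpack both sides of the claimed identity using the orthogonal decomposition of $\mathcal{M}$ furnished by Lemma~\ref{L1}. Concretely, since $\mathcal{M}$ is not contained in $K_\theta^\perp \ominus \bigvee\{\bar z\}$ (otherwise it would be trivially reduced to the inner-function case handled earlier), Lemma~\ref{L1} gives the one-dimensional orthogonal complement spanned by the unit vector $g$, so that
\[
\mathcal{M} = \bigl(\mathcal{M}\cap\{\bar z\}^\perp\bigr) \oplus \mathbb{C}\,g,
\]
and any $h\in\mathcal{M}$ admits a unique decomposition $h = h_0 + \lambda g$ with $h_0\in\mathcal{M}\cap\{\bar z\}^\perp$ and $\lambda\in\mathbb{C}$.

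Next, I would compute the left-hand side: since $h_0\perp\bar z$ by construction,
\[
(h)_{-1} = \langle h,\bar z\rangle = \langle h_0,\bar z\rangle + \lambda\langle g,\bar z\rangle = \lambda\,(g)_{-1}.
\]
For the right-hand side, I use that $h_0\perp g$ (this is precisely the content of the direct-sum decomposition above) together with $\|g\|=1$, so
\[
\langle h, g\rangle = \langle h_0,g\rangle + \lambda\langle g,g\rangle = \lambda.
\]
Crucially, $(g)_{-1}\in\mathbb{R}$ by hypothesis, so it pulls out of the second slot of the inner product without conjugation:
\[
\langle h,(g)_{-1}g\rangle = \overline{(g)_{-1}}\,\langle h,g\rangle = (g)_{-1}\,\lambda.
\]
Comparing, $(h)_{-1} = \lambda(g)_{-1} = \langle h,(g)_{-1}g\rangle$, which is the desired identity.

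There is essentially no obstacle here: the corollary is a bookkeeping consequence of Lemma~\ref{L1} together with the reality of $(g)_{-1}$. The only point requiring mild care is to use the reality of $(g)_{-1}$ when moving it across the inner product; without it one would get $\overline{(g)_{-1}}\lambda$ on the right and the identity would fail. The reduction of $g$ to $\overline{H_0^2}$ provided by Lemma~\ref{L2} is not logically needed for this corollary, but it justifies the normalization making $(g)_{-1}$ real (one can always replace $g$ by a unimodular scalar multiple to ensure $\langle g,\bar z\rangle\in\mathbb{R}_{\geq 0}$), which is what makes the clean formula possible.
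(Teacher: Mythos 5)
Your proof is correct and follows essentially the same route as the paper: both rest on the decomposition $\mathcal{M} = (\mathcal{M}\cap\{\bar z\}^\perp)\oplus\mathbb{C}g$ from Lemma~\ref{L1} and the reality of $(g)_{-1}$; the paper merely verifies the (linear-in-$h$) identity on each summand separately, whereas you write out a general $h = h_0 + \lambda g$ explicitly. Your added remark that the reality of $(g)_{-1}$ is what lets the scalar pass through the second slot of the inner product is exactly the point the paper uses implicitly.
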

	
	\begin{proof}
		Since, $\mathcal{M}= (\mathcal{M} \cap \{\bar{z}\}^\perp) \oplus \big(\bigvee\{g\}\big),$ we have: 
		\begin{itemize}
			\item if $h\in (\mathcal{M} \cap \{\bar{z}\}^\perp),$ then $(h)_{-1} = \langle h, \bar{z} \rangle =0= \langle h, g \rangle = \langle h, (g)_{-1} g \rangle,$ and
			\item if $h\in \bigvee\{g\},$ then $h = cg,$ for some $c\in \mathbb{C}$. Therefore, $ (h)_{-1}= c(g)_{-1}= \langle cg, (g)_{-1} g \rangle = \langle h, (g)_{-1}g \rangle,$ and hence proved. 
		\end{itemize}
	\end{proof}
	
	In the following, we establish a connection between a nearly $U_\theta$ -invariant subspace and a subspace that remains invariant under the action of a specific rank-one perturbation of $U_\theta$.”

	\begin{lemma}\label{L3}
		If $\mathcal{M}$ is a nearly $U_\theta$ -invariant subspace of $K_\theta ^\perp$ of the form $\mathcal{M}=\mathcal{M}_- \oplus \mathcal{M}_+,$ then $\mathcal{M}$ is an invariant subspace under the action of $R_g$ operator, where $ R_g(h)= U_\theta ( I- g\otimes g )(h), h \in K_\theta^\perp,$ and $g$ carry the same meaning as in Corollary \ref{C1}. 
	\end{lemma}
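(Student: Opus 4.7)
My plan is to verify the claimed invariance pointwise: for an arbitrary $h\in\mathcal{M}$, I will exhibit $R_g h$ as $U_\theta$ applied to a vector already lying in $\mathcal{M}\cap U_\theta^* K_\theta^\perp$, at which point the nearly $U_\theta$-invariance hypothesis closes the argument. Unwinding the definition of $R_g$, the relevant vector is $(I - g\otimes g)h = h - \langle h,g\rangle g$, so everything reduces to showing that this lies in $\mathcal{M}\cap\{\bar z\}^\perp$ (recall from the beginning of Section~3 that $U_\theta^* K_\theta^\perp = K_\theta^\perp\ominus\bigvee\{\bar z\}$ in the case $\theta_0=0$).

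Membership of $h - \langle h,g\rangle g$ in $\mathcal{M}$ is automatic, since both $h$ and $g$ lie in $\mathcal{M}$, so the entire content is the orthogonality relation $\langle h - \langle h,g\rangle g,\bar z\rangle = 0$. The key input will be Corollary~\ref{C1}, which asserts $(h)_{-1} = \langle h,(g)_{-1}g\rangle$ for every $h\in\mathcal{M}$. Because $(g)_{-1}$ has been arranged to be real, conjugate-linearity in the second slot yields $\langle h,(g)_{-1}g\rangle = (g)_{-1}\langle h,g\rangle$, and combining this with $(h)_{-1} = \langle h,\bar z\rangle$ and $(g)_{-1} = \langle g,\bar z\rangle$ produces the identity $\langle h,\bar z\rangle = \langle h,g\rangle\,\langle g,\bar z\rangle$. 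Subtracting then gives $\langle h - \langle h,g\rangle g,\bar z\rangle = 0$, exactly as required.

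With $h - \langle h,g\rangle g$ now placed in $\mathcal{M}\cap U_\theta^* K_\theta^\perp$, the hypothesis that $\mathcal{M}$ is nearly $U_\theta$-invariant delivers $U_\theta(h - \langle h,g\rangle g)\in\mathcal{M}$; but this is precisely $R_g h$, so $R_g\mathcal{M}\subset\mathcal{M}$ as claimed. I do not anticipate a real obstacle: the argument is essentially a one-identity consequence of Corollary~\ref{C1}. The only subtle point is the conjugate-linearity issue involving $(g)_{-1}$, but since $(g)_{-1}$ was deliberately normalized to be real in the setup preceding Corollary~\ref{C1}, this causes no trouble. The degenerate case $\mathcal{M}\subset\{\bar z\}^\perp$ (where $g$ does not exist in the sense of Lemma~\ref{L1}) falls outside the scope of the present lemma and is already covered by the earlier theorem identifying such $\mathcal{M}$ as $\alpha\theta H^2$.
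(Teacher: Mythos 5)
Your proposal is correct and follows essentially the same route as the paper: both reduce the claim to showing $(I-g\otimes g)h = h - \langle h,g\rangle g \in \mathcal{M}\cap\{\bar z\}^\perp$ and then invoke the near-invariance hypothesis, with Corollary~\ref{C1} (plus the reality of $(g)_{-1}$) supplying the orthogonality to $\bar z$. Your write-up is in fact slightly more careful than the paper's, which asserts $\psi\in\mathcal{M}\cap\{\bar z\}^\perp$ without spelling out the conjugate-linearity step you make explicit.
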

	
	\begin{proof}
		For $h \in \mathcal{M},$ we have 
		\begin{align*}
		& R_g(h) = U_\theta ( I- g\otimes g )(h)\\
		& = U_\theta ( h- \langle h, g\rangle g )\\
		& = U_\theta \left( h- \frac{(h)_{-1}}{(g)_{-1}}g \right)= U_\theta ( \psi )( \text{ say} ).
		\end{align*}
		Observe that the element $\psi \in \mathcal{M} \cap \{\bar{z}\}^\perp,$ and hence by the definition of nearly $U_\theta$ -invariance, $U_\theta(\psi)\in \mathcal{M}.$ Thus, $\mathcal{M}$ is invariant under $R_g$ operator. 
	\end{proof}
	
	Now, we are set to apply Hitt's algorithm, which in turn yields the following principal result of this article.
	
	\begin{theorem}\label{mainthm}
		Let $\mathcal{M}$ be a non-trivial nearly dual compressed shift $U_\theta$ -invariant subspace of the form $\mathcal{M}= \mathcal{M}_- \oplus \mathcal{M}_+$. Then $\mathcal{M}$ takes one of the following form: $ \theta H^2,~ \gamma \theta H^2, ~\overline{H_0^2}, ~g \overline{K_\alpha},~   \overline{H_0^2} \oplus \gamma \theta H^2,~ g \overline{K_\alpha} \oplus \gamma \theta H^2,~ g \overline{K_\alpha} \oplus \theta H^2,~ \overline{ \eta H^2 },~ \overline{ \eta H^2 } \oplus \theta H^2,~ \overline{ \eta H^2 } \oplus \gamma \theta H^2,$ where $g\in \overline{H_0^2}$ as described in Corollary \ref{C1}, and $\alpha, \gamma$ are inner functions with $\gamma_0\neq0$.
	\end{theorem}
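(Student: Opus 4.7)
The plan is to adapt Hitt's algorithm \cite{DH} to the present setting, after first peeling off a degenerate case. I would begin by asking whether $\mathcal{M}$ lies entirely in $K_\theta^\perp \ominus \big(\bigvee\{\bar z\}\big)$. If so, the theorem preceding Lemma~\ref{L1} already gives $\mathcal{M} = \alpha\theta H^2$, which accounts for $\theta H^2$ and $\gamma\theta H^2$ in the list. Otherwise, Lemmas~\ref{L1}--\ref{L2} and Corollary~\ref{C1} produce a unit vector $g \in \mathcal{M} \cap \overline{H_0^2}$ with $(g)_{-1}$ real that spans $\mathcal{M} \ominus (\mathcal{M} \cap \{\bar z\}^\perp)$, and Lemma~\ref{L3} tells us $\mathcal{M}$ is invariant under $R_g = U_\theta(I - g \otimes g)$.

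The crucial reduction is that the direct-sum decomposition $\mathcal{M} = \mathcal{M}_- \oplus \mathcal{M}_+$ is preserved by $R_g$. Since $\theta_0 = 0$ implies $\theta H^2 \perp \overline{H_0^2}$, every $f_+ \in \mathcal{M}_+$ satisfies $\langle f_+, g\rangle = 0$, so by Theorem~\ref{TH} we have $R_g(f_+) = U_\theta(f_+) = Sf_+ \in \theta H^2$, which places $Sf_+$ in $\mathcal{M} \cap \theta H^2 = \mathcal{M}_+$. Beurling's theorem \cite{AB} then forces $\mathcal{M}_+$ to be $\{0\}$, $\theta H^2$, or $\gamma\theta H^2$ for some inner $\gamma$. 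Analogously, $R_g(f_-) = \mathbb{S}^*\big(f_- - \langle f_-, g\rangle g\big) \in \mathcal{M} \cap \overline{H_0^2} = \mathcal{M}_-$, so $\mathcal{M}_-$ is $R_g$-invariant; equivalently $\mathbb{S}^*(\mathcal{M}_- \cap \{\bar z\}^\perp) \subset \mathcal{M}_-$, which is ``nearly $\mathbb{S}^*$-invariance'' of $\mathcal{M}_-$ in $\overline{H_0^2}$ with extremal vector $g$.

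To classify such $\mathcal{M}_-$, I would transfer the problem to $H^2$ through the anti-unitary $J: H^2 \to \overline{H_0^2}$ defined by $J(h) = \bar z \bar h$, which intertwines $S^*$ with $\mathbb{S}^*$ and sends $zH^2$ onto $\overline{H_0^2} \cap \{\bar z\}^\perp$. Setting $M' := J^{-1}(\mathcal{M}_-)$ and $g' := J^{-1}(g)$, the invariance above becomes the classical nearly $S^*$-invariance of $M'$ with extremal function $g'$, and Hitt's theorem yields two mutually exclusive alternatives: either $M' = g' K_\alpha$ with $g' \cdot : K_\alpha \to M'$ an isometry, or $M' = g' H^2 = \eta' H^2$ with $g' = \eta'$ inner. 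Pushing these back through $J$ gives $\mathcal{M}_- = g\,\overline{K_\alpha}$ in the first case and $\mathcal{M}_- = \overline{\eta H^2}$ with $\eta = z\eta'$ in the second --- the subcase $\eta' = 1$ producing $\mathcal{M}_- = \overline{H_0^2}$. Pairing the three possibilities for $\mathcal{M}_-$ with the three possibilities for $\mathcal{M}_+$ generates the remaining eight forms in the list (with $\overline{H_0^2} \oplus \theta H^2 = K_\theta^\perp$ suppressed as the trivial case).

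The main obstacle is transporting Hitt's algorithm cleanly across the anti-unitary $J$: the ``division by $g$'' step, which in the $H^2$ setting lands in an $S^*$-invariant subspace, must be shown to translate to an $\mathbb{S}^*$-invariant subspace of $\overline{H^2}$ after unwinding the anti-linear identification, and the isometry property underlying Hitt's theorem must be preserved. A secondary bookkeeping task is matching the inner factors $\alpha$, $\gamma$, $\eta$ to the conventions of the statement --- in particular, $\eta = z\eta'$ automatically has $\eta_0 = 0$, and any trivial $z$-factor in $\gamma$ can be absorbed to ensure $\gamma_0 \neq 0$.
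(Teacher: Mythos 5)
Your proposal is correct, and for the central structural step it takes a genuinely different route from the paper. Both arguments share the same preparation (the degenerate case $\mathcal{M}\subset K_\theta^\perp\ominus\bigvee\{\bar z\}$, the extremal vector $g$ from Lemmas~\ref{L1}--\ref{L2} and Corollary~\ref{C1}, and the $R_g$-invariance from Lemma~\ref{L3}), but the paper then re-runs Hitt's iteration inside $K_\theta^\perp$: it expands $f=g\sum_{k=0}^{N}a_k\bar z^k+{U_\theta^*}^{N+1}R_g^{N+1}(f)$, uses the telescoping norm identity to get $\sum_k|a_k|^2<\infty$, invokes a convergence lemma of Benhida--Timotin to pass to the limit $f=g\bar h+f_+$, and finally verifies the identity $r_g(f_-)=g\,\mathscr{S}^*(\bar h)$ to identify the invariance of $\mathcal{N}$ and $\mathcal{M}_+$. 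You instead observe that $R_g$ respects the splitting $\mathcal{M}_-\oplus\mathcal{M}_+$ (since $g\in\overline{H_0^2}\perp\theta H^2$ and, by Theorem~\ref{TH}, $U_\theta$ acts blockwise when $\theta_0=0$), dispose of $\mathcal{M}_+$ by Beurling, and transport $\mathcal{M}_-$ to $H^2$ through the anti-unitary $J(h)=\bar z\bar h$ so that the classical Hitt--Sarason theorem applies verbatim with extremal function $g'=J^{-1}(g)$. The transfer you worry about is harmless: $J$ is isometric, carries closed subspaces to closed subspaces, satisfies $J S^*=\mathbb{S}^*J$ and $J(zH^2)=\overline{H_0^2}\cap\{\bar z\}^\perp$, and $g$ does span $\mathcal{M}_-\ominus(\mathcal{M}_-\cap\overline{zH_0^2})$ by the computation in Lemma~\ref{L2}, so both the near-invariance hypothesis and the isometric-multiplier conclusion pass through unchanged. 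Your route buys a shorter proof (no iteration, no convergence lemma) and a more transparent origin for the forms $\overline{\eta H^2}$, namely the Hitt alternative $M'=g'H^2$ with $g'$ forced to be inner and $\eta=zg'$; the paper's longer computation buys the explicit representation $f=g\bar h+f_+$ and the identity $r_g(f_-)=g\,\mathscr{S}^*(\bar h)$ as byproducts.
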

	
	\begin{proof}
		Let $\theta_0=0$ and let $f\in \mathcal{M}.$ Then $f$ can be written as $$ f= a_0 g + f_1, $$ where $a_0g = P_{ (\mathcal{M}\ominus ( \mathcal{M} \cap \{\bar{z}\}^\perp )) } ( f )= P_{\mathbb{C}g} (f) $ and $f_1 \in ( \mathcal{M} \cap \{\bar{z}\}^\perp) $. Then, by the definition of near invariance for $U_\theta,$ we have $ U_\theta f_1= F_1, $ for some $F_1 \in \mathcal{M}.$
		
		So, we have: $ U_\theta^* F_1 = U_\theta^* U_\theta f_1= ( I-\bar{z}\otimes \bar{z} ) f_1= f_1.$ Also, by applying $U_\theta$ further  on both the sides, we have $ U_\theta f_1= F_1 \implies  U_\theta U_\theta^* F_1 = F_1  \implies F_1 \perp \theta.$ Therefore, $U_\theta^* F_1= \bar{z}F_1.$
		On the other hand, \begin{align*}
		&  f_1= U_\theta^* F_1 = f- a_0 g = ( Id- g \otimes g ) ( f )\\
		& \implies F_1 = U_\theta ( Id- g \otimes g ) f = R_g(f) \text{ (say).}
		\end{align*}
		Moreover, 
		\begin{align*}
		& R_g(f)= U_\theta (f - \langle f, g \rangle g)\\
		& = T(f) - \langle f_- + f_+ , g \rangle g\\
		& = \mathbb{S}^* (f_-) + \langle f_-, g \rangle g + S(f_+)  \\
		& = r_g(f_-) + zf_+,
		\end{align*}
		where
		\begin{align*}
		&   f= f_- + f_+, \quad f_-\in \mathcal{M}_-,~ f_+ \in \mathcal{M}_+, ~\text{and}~ r_g(f_-) = \mathbb{S}^*( I - g\otimes g ) (f_-).
		\end{align*}
		\vspace{1in}
		
		Therefore, $$ f= a_0g + U_\theta^* F_1= a_0g+ U_\theta^* ( r_g(f_-)+zf_+ )  $$
		$$ \hspace{1in} = a_0g+ \mathbb{S}(r_g(f_-)) + f_+, $$
		$$ \text{ and } ||f||^2= |a_0|^2 +||f_+||^2 + ||r_g(f_-)||^2. $$
		Now, further expanding $f$, we get $$ f=a_0g + U_\theta^*( a_1g+f_2 ), \text{ where $f_2 \in \mathcal{M}\cap \bar{z}^\perp$ }. $$ Again, by the definition of near invariance, $ U_\theta f_2= F_2 $ for some $F_2 \in \mathcal{M} \text{ with } f_2= U_\theta^* F_2$ and $F_2 \perp \theta.$
		Furthermore, \begin{align*}
		& F_2= U_\theta ( F_1-a_1g )\\
		& = U_\theta ( I - g\otimes g )F_1 = U_\theta( I-g\otimes g)(I- g\otimes g) (f ) = R_g^2(f)\\
		& =T ( F_1 ) - \langle F_1, g \rangle g = \mathbb{S}^* ( F_{1-} ) + S (F_{1_+}) - \langle F_{1-}, g \rangle g\\
		& = r_g( F_{1-} ) + zF_{1+} = r_g^2( f_- ) + z^2 f_+.
		\end{align*}
		Thus, $$ f= a_0g + a_1 U_\theta^* g + {U_\theta^*}^2 F_2 $$
		$$ \hspace{0.68in} = (a_0 + a_1 \bar{z})g + \mathbb{S}^2 r_g^2(f_-) + f_+, $$
		$$ \text{ and } \quad ||f||^2= |a_0|^2 + |a_1|^2 + ||f_+||^2 + ||r_g^2(f_-)||^2. $$
		By continuing this iterative process, after $N$ steps steps we arrive at
		\begin{align*}
		& f=g \sum_{k=0}^N a_k \bar{z}^k + {U_\theta^*}^{N+1} F_{N+1}\\
		& \hspace{0.1in} = g \sum_{k=0}^N a_k \bar{z}^k + {U_\theta^*}^{N+1} R_g^{N+1} ( f )\\
		& \hspace{0.1in} = g \sum_{k=0}^N a_k \bar{z}^k + f_+ + \mathbb{S}^{N+1} r_g^{N+1}(f_+), 
		\end{align*}
		and
		$$  ||f||^2 = \sum_{k=0}^N|a_k|^2 + ||f_+||^2 + || r_g^{N+1}(f_-)||^2. $$
		Therefore, we can directly conclude that $ \sum\limits_{k=0}^\infty|a_k|^2 < \infty,$ and hence $ \sum\limits_{k=0}^\infty a_k \bar{z}^k = \bar{h} $ for some element $h\in H^2$.  
		
		Now, letting $N \rightarrow \infty,$ we have $f= g\bar{h} + f_+ $ since $ \mathbb{S}^{N+1} r_g^{N+1}(f_+)  \rightarrow 0 $ by using Lemma~3.3 of \cite{BT}.
		
		Hence, the nearly $U_\theta$ -invariant subspace of $K_\theta^\perp$ has the structure $\mathcal{M}= g \mathcal{N}\oplus \mathcal{M}_+,$ where $\mathcal{M}_+ $ is a closed subspace of $\theta H^2$, and $ \mathcal{N}$ is a closed subspace of  $\overline{H^2}.$ To obtain the required structure, it is necessary to show that $\bar{z}\mathcal{ N }\oplus \mathcal{M}_+ $ is $ U_\theta$ -invariant. Since, $\theta_0=0,$ then by Theorem \ref{TH} it is equivalent of showing that $ \mathcal{M}_+ $ is an $S$-invariant subspace of $\theta H^2$ and $\mathcal{N}$ is an $\mathscr{S}^*$ (defined in Section 2) invariant subspace of $\overline{H^2}.$
		
		Note that, $f= f_- + f_+ = g\bar{h} + f_+ \implies f_-= g\bar{h}.$ Then, $r_g(f_-)= \mathbb{S}^*(g\bar{h})- a_0 \mathbb{S}^*(g),$ and $a_0 = \bar{h}_0.$
		On the other hand, \begin{align*}
		& \mathbb{S}^*(\bar{h}g)(z)= z\Bigl(\bar{h}g(z)- (\bar{h}g)_{-1}\bar{z}\Bigr)\\
		& = z\Bigl( \bar{h}(z)g(z) - \bar{h}_0g(z) + \bar{h}_0g(z) - \bar{h}_0g_{-1}  \Bigr)\\
		& = g(z) \mathscr{S}^*(\bar{h}) + \bar{h}_0 \mathbb{S}^* g.
		\end{align*}
		Therefore, \begin{align*}
		& r_g(f_-) = g(z) \mathscr{S}^*(\bar{h}) + \bar{h}_0 \mathbb{S}^* g - \bar{h}_0 \mathbb{S}^* g\\
		& = g(z) \mathscr{S}^*(\bar{h}).
		\end{align*}
		By Lemma \ref{L3} we conclude $R_g{\mathcal{M}}\subset \mathcal{M},$ and hence  $r_g(f_-) \in \mathcal{M}_-=g\mathcal{N}$ and $zf_+ \in \mathcal{M}_+.$ Thus, $\mathcal{M}_+$ is an $S$ invariant subspace of $\theta H^2$ and it is of the form: $\{0\}, ~\gamma\theta H^2 \text{ or } ~\theta H^2$ for some inner function $\gamma.$ Whereas, $\frac{r_g(f_- )}{g} = \mathscr{S}^* (\bar{h}) \in \mathcal{N},$ proving that $\mathcal{N}$ is of the form: $\{0\}, ~\overline{K_\alpha}, ~\overline{ \eta H^2 } ~ (\eta_0 \neq 0) \text{ or } ~\overline {H^2},$ for some inner functions $\alpha$ and $\eta.$ Therefore, $\mathcal{M}_-$ is one of the form: $ \{0\}, ~g\overline{K_\alpha}, ~g~\overline{ \eta H^2 } \text{ or } ~\overline {H_0^2}.$ This completes the proof. 
	\end{proof}
	
	Proceeding as above, we also obtain the structure of the nearly $U_\theta^*$ -invariant subspaces of $ K_\theta^\perp$. It is not difficult to see that, if $\theta_0\neq 0,$ then nearly $U_\theta^*$ -invariant subspaces are $U_\theta^*$ invariant subspaces of $K_\theta^\perp,$ and hence we get the structure of them using Theorem \ref{Th}.
	
	\begin{corollary}(Case $\theta_0=0$)
		Let $\m \subseteq K_\theta^\perp$ be a closed subspace of the form $\m=\m_-\oplus \m_+$ which is nearly invariant under the action of the dual compressed backward shift $U_\theta^*$. Then $\m_-= \{0\}, \overline{H_0^2}, \text{ or }\overline{\alpha H_0^2}$ and $\m_+= \{0\}, \theta H^2, GK_\beta,\text{ or } \\ \gamma \theta H^2$  for some inner functions $\alpha, \beta, \gamma (\gamma_0\neq0)$ and some particular element $G$ $\in \theta H^2$.
	\end{corollary}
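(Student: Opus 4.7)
The approach mirrors the proof of Theorem~\ref{mainthm}, with the two summands $\m_-$ and $\m_+$ exchanging roles. Using Lemma~\ref{Glemma}(iv) with $\theta_0=0$, we have $I-U_\theta U_\theta^*=\theta\otimes\theta$, hence $U_\theta K_\theta^\perp = K_\theta^\perp \ominus \bigvee\{\theta\}$, so the near-invariance condition rewrites as $U_\theta^*(\m \cap \{\theta\}^\perp) \subset \m$. Specializing formula (ii) of Lemma~\ref{Glemma} to $\theta_0 = 0$ shows that $U_\theta^*$ acts as $\mathbb{S}$ on $\overline{H_0^2}$ and as $h \mapsto \theta S^*(\theta^{-1}h)$ on $\theta H^2$; this is the dual of Theorem~\ref{TH}.

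For the $\m_-$ side, every $f_- \in \m_- \subset \overline{H_0^2}$ is orthogonal to $\theta \in H^2$, so $f_- \in \m \cap \{\theta\}^\perp$, and near invariance forces $\mathbb{S}(f_-) = U_\theta^* f_- \in \m \cap \overline{H_0^2} = \m_-$. Thus $\m_-$ is $\mathbb{S}$-invariant. Under the unitary $\mathcal{V}: H^2 \to \overline{H_0^2}$ defined by $f \mapsto \bar z \bar f$, the operator $\mathbb{S}$ is equivalent to the classical shift $S$ on $H^2$, and Beurling's theorem pulled back yields $\m_- \in \{\{0\}, \overline{\alpha H_0^2}\}$ for some inner $\alpha$ (with $\alpha \equiv 1$ giving $\overline{H_0^2}$), exhausting the three listed options.

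For the $\m_+$ side, set $\mathcal{N}^\flat := \theta^{-1}\m_+ \subset H^2$. For $h \in \mathcal{N}^\flat$ with $h(0) = 0$, the identity $\langle \theta h, \theta \rangle = h(0) = 0$ places $\theta h$ in $\m_+ \cap \{\theta\}^\perp$, so near invariance forces $U_\theta^*(\theta h) = \theta S^* h \in \m_+$, whence $S^* h \in \mathcal{N}^\flat$. Hence $\mathcal{N}^\flat$ is a nearly $S^*$-invariant subspace of $H^2$. Applying Hitt's theorem~\cite{DH}, together with the elementary observation that $\gamma H^2$ is nearly $S^*$-invariant precisely when $\gamma_0 \neq 0$, gives the four classes $\mathcal{N}^\flat \in \{\{0\}, H^2, \gamma H^2\,(\gamma_0 \neq 0), g^\flat K_\beta\}$ for the Hitt-extremal element $g^\flat$ and some inner $\beta$. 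Multiplying by $\theta$ and setting $G := \theta g^\flat$ delivers the four stated forms $\m_+ \in \{\{0\}, \theta H^2, \gamma\theta H^2, GK_\beta\}$. The main obstacle is the clean translation of the $\m_+$-side analysis into a classical near-$S^*$-invariance problem on $H^2$ via the isometry $h \mapsto \theta h$; once that is in place Hitt's theorem does the heavy lifting, the $\m_-$-side analysis requires no iteration, and the two classifications combine to give the corollary.
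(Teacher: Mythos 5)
Your proof is correct, and it reaches the stated classification by a genuinely more direct route than the paper intends. The paper offers no explicit argument for this corollary beyond the phrase ``proceeding as above,'' meaning: rerun the entire iterative Hitt algorithm of Theorem~\ref{mainthm} inside $K_\theta^\perp$ with the roles of $\overline{H_0^2}$ and $\theta H^2$ exchanged, extracting an extremal element $G\in\theta H^2$ step by step and controlling norms along the way. You instead observe that, because $\m$ splits as $\m_-\oplus\m_+$ and both $\m_-$ and $\m_+$ sit inside $\m$, the near-invariance condition $U_\theta^*(\m\cap\{\theta\}^\perp)\subset\m$ decouples: every $f_-\in\m_-$ is automatically orthogonal to $\theta$, so $\m_-$ is fully $\mathbb{S}$-invariant and Beurling's theorem (transported by $f\mapsto\bar z\bar f$) finishes that half with no iteration at all; and conjugating by the isometry $h\mapsto\theta h$ turns the $\m_+$ half into a classical nearly $S^*$-invariant subspace of $H^2$, to which Hitt's theorem applies verbatim. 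Your key computations --- $U_\theta K_\theta^\perp=K_\theta^\perp\ominus\bigvee\{\theta\}$ from Lemma~\ref{Glemma}(iv), $U_\theta^*f_-=\bar z f_-$ and $U_\theta^*(\theta h)=\theta S^*h$ from Lemma~\ref{Glemma}(ii) with $\theta_0=0$, and $\langle\theta h,\theta\rangle=h(0)$ --- are all correct, as is the use of $\m\cap\overline{H_0^2}=\m_-$ and $\m\cap\theta H^2=\m_+$ (valid precisely because $\overline{H_0^2}\cap\theta H^2=\{0\}$). What your approach buys is a clean reduction to two known classical theorems in place of a full re-derivation; what the paper's approach buys is uniformity with Theorem~\ref{mainthm} and, in principle, the extra structural information Hitt's algorithm yields about the extremal function $G$ (e.g.\ that multiplication by the corresponding $g^\flat$ is isometric on $K_\beta$), though the corollary as stated does not require it.
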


	In their paper, Câmara and Ross \cite{CR} showed that if $\theta_0 = 0$, then $U_\theta$ is unitarily equivalent to the operator $S \oplus S^*$ acting on the vector-valued Hardy space $H_{\mathbb{C}^2}^2$. Consequently, when a nearly $U_\theta$ -invariant subspace of $K_\theta^\perp$ is transferred to a nearly $S \oplus S^*$-invariant subspace of $H_{\mathbb{C}^2}^2$, one obtains the so-called splitting subspaces, those of the form $\mathcal{N}_1 \oplus \mathcal{N}_2$, where $\mathcal{N}_1$ and $\mathcal{N}_2$ are closed subspaces of $H^2$.

	\begin{corollary}\label{mainconsequence}
		Let $\mathcal{M}$ be a non-trivial splitting subspace of $H_{\mathbb{C}^2}^2$ that is nearly invariant under the action of the operator $S\oplus S^*$. Then $\mathcal{M}$ has one of the following form: $ \alpha H^2 \oplus K_\beta, \{0\}\oplus K_\beta, \alpha H^2 \oplus \gamma H^2(\gamma_0 \neq 0), \{0\} \oplus \gamma H^2 (\gamma_0 \neq 0), \alpha H^2 \oplus gK_\beta,  \text{ or } \{0\}\oplus gK_\beta,$ where $\alpha, \beta, \gamma$ are inner functions and some particular function $g\in H^2$.
	\end{corollary}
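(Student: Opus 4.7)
The plan is to reduce the corollary to Theorem~\ref{mainthm} via the unitary equivalence of Câmara and Ross between $U_\theta$ (in the case $\theta_0=0$) and $S\oplus S^*$ acting on $H_{\mathbb{C}^2}^2$. First I would make that unitary explicit: using the decomposition $K_\theta^\perp = \theta H^2\oplus \overline{H_0^2}$, define $V: K_\theta^\perp \to H^2\oplus H^2$ by $V(\theta f + \bar{z}\bar{h}) = f\oplus h$ for $f,h \in H^2$. A short computation, combined with Theorem~\ref{TH}, shows that the first component intertwines $S$ on $\theta H^2$ with $S$ on $H^2$, while the second component intertwines $\mathbb{S}^*$ on $\overline{H_0^2}$ with $S^*$ on $H^2$; hence $VU_\theta V^{-1}=S\oplus S^*$.

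Since $V$ is unitary and intertwines the two operators, a splitting subspace $\mathcal{N}_1\oplus\mathcal{N}_2\subset H_{\mathbb{C}^2}^2$ is nearly $(S\oplus S^*)$-invariant if and only if $V^{-1}(\mathcal{N}_1\oplus\mathcal{N}_2)=\mathcal{M}_-\oplus\mathcal{M}_+\subset K_\theta^\perp$ is nearly $U_\theta$-invariant of the split form handled by Theorem~\ref{mainthm}, via the identification $\mathcal{N}_1=V(\mathcal{M}_+)$, $\mathcal{N}_2=V(\mathcal{M}_-)$. It then remains to transport each of the ten forms from Theorem~\ref{mainthm} through $V$ and check that together they exhaust the six families in the statement.

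The routine cases are immediate: $\theta H^2 \mapsto H^2\oplus\{0\}$ and $\gamma\theta H^2 \mapsto \gamma H^2\oplus\{0\}$ fall under $\alpha H^2\oplus K_\beta$ on taking $\beta=1$ (so $K_\beta=\{0\}$); $\overline{H_0^2}\mapsto\{0\}\oplus H^2$, $\overline{\eta H^2}\mapsto\{0\}\oplus\eta H^2$, and their direct sums with $\theta H^2$ or $\gamma\theta H^2$ fall under $\alpha H^2\oplus\gamma H^2\,(\gamma_0\neq 0)$ or $\{0\}\oplus\gamma H^2\,(\gamma_0\neq 0)$, allowing the relevant inner function to be the constant~$1$. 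The one step requiring genuine computation is the image of $g\overline{K_\alpha}$: writing the distinguished element $g\in\overline{H_0^2}$ from Corollary~\ref{C1} as $g=\bar{z}\,\bar{p}$ for a unique $p\in H^2$, one verifies that $V(g\bar{k})=pk$ for every $k\in K_\alpha$, so $V(g\overline{K_\alpha})=pK_\alpha$. Relabelling $p$ as $g\in H^2$ (the ``particular function'' in the statement) yields $\{0\}\oplus gK_\beta$ from the item $g\overline{K_\alpha}$ of Theorem~\ref{mainthm}, and $\alpha H^2\oplus gK_\beta$ from the items $g\overline{K_\alpha}\oplus\theta H^2$ (with $\alpha=1$) and $g\overline{K_\alpha}\oplus\gamma\theta H^2$.

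The main obstacle is really bookkeeping: one must verify that every one of the ten forms in Theorem~\ref{mainthm}, possibly after allowing an inner factor to degenerate to~$1$, maps to exactly one of the six families in the statement, with the side condition $\gamma_0\neq 0$ carrying over correctly. Beyond the explicit form of $V$ and the Hitt-type identification $V(g\overline{K_\alpha})=pK_\alpha$, no further analytic input is required.
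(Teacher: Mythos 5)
Your proposal is correct and follows exactly the route the paper intends: the paper gives no written proof of Corollary~\ref{mainconsequence}, deriving it implicitly from Theorem~\ref{mainthm} via the C\^amara--Ross unitary equivalence between $U_\theta$ (for $\theta_0=0$) and $S\oplus S^*$, which is precisely the unitary $V$ you make explicit before transporting the ten forms. Your bookkeeping, including the computation $V(g\overline{K_\alpha})=\{0\}\oplus pK_\alpha$, matches the identification the paper relies on.
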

	This naturally leads to the following open problem
	\vspace{0.1in}
	
	\textbf{Open Problem}: Taking  Definition \ref{D} into account, figure out all the non-splitting nearly $S\oplus S^*$-invariant subspaces of $H_{\mathbb{C}^2}^2$.
	
	\subsection{ Near invariance of some other operators }
	In this sub-section, we discuss near invariance corresponding to the operator $T^*_{{B_n}}= T_{\overline{B_n}}$ ($B_n$ is a finite Blaschke product) acting on the Hardy space $H^2$ using the Definition \ref{D}.
	
	Now according to Definition \ref{D}, a closed subspace $\f$ of $H^2$ is a nearly $T^*_{{B_n}}$-invariant if $ T^*_{{B_n}} ( \f \cap B_n H^2 ) \subset \f,$ where $B_n$ is a finite Blaschke product.
	
	\begin{lemma}
		If a closed subspace $\f \subseteq H^2$ is a nearly $T^*_{{B_n}}$- invariant, then $\f$ can not be fully contained in $B_n H^2$.
	\end{lemma}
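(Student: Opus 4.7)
The plan is to argue by contradiction. Suppose $\mathcal{F}$ is a non-trivial closed subspace fully contained in $B_n H^2$, and pick $0 \neq f \in \mathcal{F}$. Since $\mathcal{F} \subseteq B_n H^2$, the intersection $\mathcal{F} \cap B_n H^2$ coincides with $\mathcal{F}$ itself, so the near $T^*_{B_n}$-invariance of $\mathcal{F}$ collapses to a genuine invariance: $T^*_{B_n} \mathcal{F} \subset \mathcal{F}$.

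The next step is to unpack how $T^*_{B_n} = T_{\overline{B_n}}$ acts on $B_n H^2$. Writing $f = B_n g$ with $g \in H^2$ and using $|B_n| = 1$ a.e. on $\mathbb{T}$, one has $T_{\overline{B_n}} f = P(\overline{B_n} B_n g) = P(g) = g$. Thus $T^*_{B_n}$ acts on $B_n H^2$ as division by $B_n$ and lands back in $H^2$. By the reduced invariance above, this $g$ must again lie in $\mathcal{F} \subseteq B_n H^2$, so $g = B_n h$ for some $h \in H^2$, hence $f = B_n^2 h \in B_n^2 H^2$. Iterating this argument $k$ times yields $f \in B_n^k H^2$ for every $k \geq 1$.

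To reach the contradiction I would then invoke the fact that $\bigcap_{k \geq 1} B_n^k H^2 = \{0\}$ for any non-constant inner function. Concretely, since $B_n$ is a (non-trivial) finite Blaschke product, it has a zero $z_0 \in \mathbb{D}$ of some multiplicity $m \geq 1$; the containment $f \in B_n^k H^2$ forces $f$ to vanish at $z_0$ to order at least $km$ for every $k$, which by the identity theorem forces $f \equiv 0$, contradicting $f \neq 0$.

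The argument is short and the only delicate point is the initial reduction: one must observe that assuming $\mathcal{F} \subseteq B_n H^2$ upgrades the near-invariance condition $T^*_{B_n}(\mathcal{F} \cap B_n H^2) \subset \mathcal{F}$ to full invariance $T^*_{B_n}\mathcal{F} \subset \mathcal{F}$, because in this situation the intersection does no cutting. Once this is in place, the isometric ``division by $B_n$'' action on $B_n H^2$ and the standard triviality of $\bigcap_k B_n^k H^2$ close the proof; implicitly one is using that $B_n$ is a genuinely non-constant finite Blaschke product, which is the intended meaning of the notation.
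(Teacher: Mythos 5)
Your proposal is correct and follows essentially the same strategy as the paper: observe that $\mathcal{F}\subseteq B_nH^2$ upgrades near $T^*_{B_n}$-invariance to genuine invariance, iterate to conclude that every $f\in\mathcal{F}$ lies in $\bigcap_{k\geq 1}B_n^kH^2$, and deduce $f=0$. The only cosmetic difference is that the paper justifies the last step via the von Neumann--Wold decomposition $f=\sum_k B_n^k f_k$ with $f_k\in K_{B_n}$ (showing each $f_k$ vanishes), whereas you argue more elementarily that membership in every $B_n^kH^2$ forces $f$ to vanish to infinite order at a zero of the (non-constant) Blaschke product $B_n$.
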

	
	\begin{proof}
		Suppose $\f$ is a nearly $T^*_{{B_n}}$-invariant subspace of $H^2$ such that $\f \subseteq B_nH^2.$ Then, by using the von Neumann-Wold decomposition \cite{ARO}, any element $f\in \f$ can be expressed as 
		$$ f=f_0+ B_n f_1 + B_n^2f_2+B_n^3 f_3 + ..., $$ 
		where the elements $f_m \in K_{B_n}$ for $m\in \mathbb{N} \cup \{0\}$. 
		
		Since $\f \subseteq B_nH^2,$  we have $f_0=0.$ Therefore, by using the definition of nearly $T^*_{{B_n}}$- invariant subspace, $ ({T^*_{{B_n}}})^k (f)= T_{\bn}^k (f) \in B_n H^2$ for every $k\in \mathbb{N}.$ This leads to the fact that $f_k=0$ for every $k \in \mathbb{N}\cup\{0\},$ and hence $f=0.$
	\end{proof}
	Now, we can apply the Hitt algorithm to achieve the structure $\f.$ 
	\begin{theorem}
		Let $\f$ be a closed subspace of $H^2$ that is nearly invariant under the action of $T^*_{{B_n}}$. Then $\f$ has the structure $G\mathscr{N}$, where $ F (\in \mathscr{N})= \sum\limits_{k=0}^\infty c_k B_n^k \in H^2 $ ($c_k \in \mathbb{C},$ and $B_n$ is the finite Blaschke product, considered earlier) with $||F||^2= \sum\limits_{k=0}^\infty |c_k|^2$, and $G = P_{ (\f \ominus(\f \cap B_nH^2)) }(F)$. Moreover, $\mathscr{N}$ is an invariant subspace of $T^*_{{B_n}}$.
	\end{theorem}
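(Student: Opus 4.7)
The plan is to carry out Hitt's iterative algorithm with the finite Blaschke product $B_n$ playing the role that the coordinate $\bar z$ played in the proof of Theorem~\ref{mainthm}. The essential analytic inputs I will use are that multiplication by $B_n$ is an isometry on $H^2$, that $T^*_{B_n}=T_{\overline{B_n}}$ acts as exact division by $B_n$ on $B_n H^2$, and that $T_{B_n}$ is a pure isometry, so $\bigcap_{N\ge 0} B_n^{N} H^2 = \{0\}$.

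By the preceding lemma, $\mathcal{F} \not\subseteq B_n H^2$, so $\mathcal{G} := \mathcal{F} \ominus (\mathcal{F} \cap B_n H^2)$ is a non-trivial closed subspace of $\mathcal{F}$. For any $f \in \mathcal{F}$, I would decompose orthogonally $f = g_0 + h_1$ with $g_0 \in \mathcal{G}$ and $h_1 \in \mathcal{F} \cap B_n H^2$. Near-invariance then gives $f_1 := T^*_{B_n}(h_1) = h_1/B_n \in \mathcal{F}$, so $h_1 = B_n f_1$. Iterating the same decomposition on $f_1,f_2,\ldots$ yields, after $N$ steps,
$$ f \;=\; \sum_{k=0}^{N} B_n^{k}\, g_k \;+\; B_n^{N+1} f_{N+1}, \qquad g_k \in \mathcal{G},\; f_{N+1}\in\mathcal{F}. $$
At each stage the split $f_k = g_k + h_{k+1}$ is $H^2$-orthogonal and $\|h_{k+1}\|=\|f_{k+1}\|$ because $B_n$ is inner; induction therefore yields the Pythagorean identity
$$ \|f\|^2 \;=\; \sum_{k=0}^{N} \|g_k\|^2 \;+\; \|f_{N+1}\|^2, $$
so in particular $\sum_{k}\|g_k\|^2 < \infty$.

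Sending $N \to \infty$: since $B_n^{N+1} f_{N+1} \in B_n^{N+1} H^2$ and the closed subspaces $B_n^{N+1} H^2$ decrease to $\{0\}$, any weak subsequential limit of $\{B_n^{N+1} f_{N+1}\}$ lies in $\bigcap_N B_n^{N+1} H^2 = \{0\}$; combined with the partial-sum identity $S_N := \sum_{k=0}^{N} B_n^{k} g_k = f - B_n^{N+1} f_{N+1}$, this forces $S_N \rightharpoonup f$, and then the monotonicity of $\|B_n^{N+1} f_{N+1}\|^2 = \|f\|^2 - \sum_{k=0}^{N} \|g_k\|^2$ upgrades this to norm convergence, giving $f = \sum_{k=0}^{\infty} B_n^{k} g_k$. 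Assuming $\mathcal{G}$ is one-dimensional, fix a unit generator $G \in \mathcal{G}$; then $g_k = c_k G$ and $F := \sum_{k} c_k B_n^{k} \in H^2$ by orthonormality of $\{B_n^{k}\}_{k\ge 0}$, with $\|F\|^2 = \sum|c_k|^2$ and $f = GF$. Defining $\mathscr{N}:=\{F\in H^2 : GF \in \mathcal{F}\}$ and reading off the iteration step, the map $f \mapsto f_1$ corresponds exactly to $F \mapsto T^*_{B_n} F$, which establishes that $\mathscr{N}$ is $T^*_{B_n}$-invariant.

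The principal obstacle is confirming that $\mathcal{G}$ is actually one-dimensional: in the classical $S^*$ setting the functional $f \mapsto f(0)$ immediately forces $\dim \bigl(\mathcal{F}/(\mathcal{F}\cap zH^2)\bigr)\le 1$, but for a Blaschke product of degree $n$ the quotient $\mathcal{F}/(\mathcal{F}\cap B_n H^2)$ could a priori have dimension up to $n$; an analogue of Lemma~\ref{L1} adapted to $T^*_{B_n}$ is required, and it is precisely that step which singles out the distinguished generator $G$. Once this is in place, the remaining check that $G$ acts as an isometric multiplier on $\mathscr{N}$ (so that the Pythagorean identity survives in the limit) follows from the convergence argument above, and the factorization $\mathcal{F} = G \mathscr{N}$ drops out.
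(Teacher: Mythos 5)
Your iteration and convergence steps are sound and follow exactly the route the paper intends: the paper in fact omits its proof entirely, asserting only that the argument is ``essentially identical to Hitt--Sarason's with $B_n$ replaced by $z$,'' and your write-up is a faithful execution of that plan (the orthogonal split $f = g_0 + h_1$, exact division by $B_n$ on $B_nH^2$, the Pythagorean identity, and the weak-limit-plus-lower-semicontinuity upgrade to norm convergence are all correct).

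The genuine gap is the one you flag yourself and then set aside: the one-dimensionality of the wandering subspace $\mathcal{G} = \mathcal{F}\ominus(\mathcal{F}\cap B_nH^2)$. This is not a missing lemma that an ``analogue of Lemma~\ref{L1}'' could supply --- it is false whenever $\deg B_n = n \ge 2$. Lemma~\ref{L1} works because $\{\bar z\}^\perp$ has codimension one in $K_\theta^\perp$, whereas $B_nH^2$ has codimension $n$ in $H^2$, so $\dim\mathcal{G}$ can be as large as $n$. Concretely, $\mathcal{F}=K_{B_n}$ satisfies $\mathcal{F}\cap B_nH^2=\{0\}$, hence is vacuously nearly $T_{\overline{B_n}}$-invariant with $\mathcal{G}=K_{B_n}$ of dimension $n$; and it admits no factorization $G\mathscr{N}$ of the stated kind, since any two linearly independent elements of $\mathscr{N}$ have a nontrivial combination $F\in B_n H^2$, forcing $GF\in K_{B_n}\cap B_nH^2=\{0\}$ and hence $F=0$. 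So the single-generator conclusion (and with it the theorem as literally stated) fails; the correct analogue requires up to $n$ generators, as in the vector-valued Hitt theorem of Chalendar--Chevrot--Partington or the finite-defect version in Liang--Partington. Your proof is therefore complete only under the additional hypothesis $\dim\mathcal{G}=1$, and no argument in the paper closes this either.
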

	
	\begin{proof}
		We omit the details, as the proof is essentially identical to Hitt–Sarason’s argument, with $B_n$ replaced by $z$.
	\end{proof}
	The above result is analogous to that obtained by Liang--Partington in \cite{LP}. 
	Recall that Liang--Partington introduced the notion of a nearly $T^{-1}$-invariant subspace 
	of a Hilbert space $\mathcal{H}$, where $T^{-1}$ denotes the left inverse of a bounded operator 
	$T$ acting on $\mathcal{H}$. The aim of this subsection is to demonstrate that, by employing 
	Definition~\ref{D} from the introduction, one can recover results similar to those of 
	Liang--Partington \cite{LP}. This is due to the fact that if $T$ is an isometry, then $T^*$ 
	serves as the left inverse of $T$; hence, in this setting, the two notions, nearly 
	$T^{-1}$-invariant and nearly $T^*$-invariant (Definition~\ref{D}), coincide.
	


	\section*{\textbf{{Acknowledgment}}}
	A. Chattopadhyay is supported by the Core Research Grant (CRG), File No: CRG/2023/004826, by the Science and Engineering Research Board(SERB), Department of Science \& Technology (DST), Government of India. S. Jana gratefully acknowledges the support provided by IIT Guwahati, Government of India.

	\noindent $^*$ [ A. Chattopadhyay ] Department of Mathematics, Indian Institute of Technology Guwahati, Guwahati, 781039, India.\\
	\textit{Email address:}~ arupchatt@iitg.ac.in, 2003arupchattopadhyay@gmail.com.
	\vspace{0.05in}
	
	\noindent $^{**}$ [ S. Jana ] Department of Mathematics, Indian Institute of Technology Guwahati, Guwahati, 781039, India.\\
	\textit{Email address:}~ supratimjana@iitg.ac.in, suprjan.math@gmail.com.
	
\end{document}